\newtheorem{theorem}{Theorem}
\newtheorem{lemma}{Lemma}
\newtheorem{corollary}{Corollary}
\theoremstyle{definition}
\begin{document}
\title[ Invariant subspace of  composition operators]%
{Invariant subspace of composition operators on Hardy space }

\author{Tianyu Bai, Junming Liu*}
\thanks{*Corresponding author}

\address{School of Applied Mathematics, Guangdong University of Technology, Guangzhou, Guangdong, 510520, P.~R.~China}\email{18819799287@163.com}

\address{School of Applied Mathematics, Guangdong University of Technology, Guangzhou, Guangdong, 510520, P.~R.~China}\email{jmliu@gdut.edu.cn}


\begin{abstract}
We consider the invariant subspace of composition operators on Hardy space $H^p$ where the composition operators corresponding to a function $\varphi$ that is a holomorphic self-map of $\mathbb D$. Firstly, we discuss composition operators $C_\varphi$ on subspace $H_{\alpha,\beta}^p$ of Hardy space $H^p$. We will explore the invariant subspaces for $C_\varphi$ in various special cases. Secondly, we consider Beurling type invariant subspace for $C_\varphi$. When $\theta$ is a inner function, we prove that $\theta H^p$ is invariant for $C_\varphi$ if and only if $\displaystyle{\frac{\theta\circ\varphi}{\theta}}$ belongs to $\mathcal S(\mathbb D)$. Thirdly, we obtain that $z^nH^p$ is nontrivial invariant subspace for Deddends algebras $\mathcal D_{C_\varphi}$ when $C_\varphi$ is a compact composition operator and $\varphi$ satisfies that $\varphi(0)=0$ and $\parallel\varphi\parallel_\infty<1$.
\end{abstract}
\thanks{}
\thanks{}
\keywords{Hardy space, Composition operator, Invariant subspace}
\subjclass[2000]{47B35, 32A36}
\thanks{This work was supported by NNSF of China (Grant No. 11801094).}
\maketitle

\section{\bf Introduction}
The Hardy space $H^p$ \cite{Z} consists of analytic functions $f$ on open unit disk $\mathbb D$ such that
$$\parallel f\parallel^p_{H^p}=\sup_{0<r<1}\frac{1}{2\pi}\int^{2\pi}_0|f(re^{i\theta})|^pd\theta$$
for $0<p<\infty$. In this paper, we discuss the range of $p$ in Hardy space $H^p$ is $0<p<\infty$.  Let $\varphi$ be a holomorphic self-map of $\mathbb D$. The composition operator $C_\varphi$ with symbol $\varphi$ is the operator defined on Hardy space $H^p$ by
$$C_\varphi f=f\circ\varphi$$
for $z\in\mathbb D$ and $f\in H^p$. And function of the form $C_\varphi^n=f\circ\varphi\circ\cdot\cdot\cdot\circ\varphi$. It would be convenient to write $\varphi_2=\varphi\circ\varphi$ and $\varphi_{n+1}=\varphi_n\circ\varphi$ for $n\in\mathbb N$. The symbol $\mathbb N$ denotes a set of positive integer numbers and $\mathbb N_0$ denotes a set of non-negative integer numbers. Hence $C_\varphi^n=f\circ\varphi_n$. In \cite{Z}, Theorem 11.12 show that
$$\parallel C_\varphi \parallel^p\leq\frac{1+\varphi(0)}{1-\varphi(0)}$$
for $p>0$. It is not hard to see that $C_\varphi$ is a bounded operator on $H^p$. Throughout the section 4 of this paper, we will suppose that $C_\varphi$ is a compact composition operator. For basic nature of composition operator on Hardy space $H^p$, we refer the reader to \cite{ABS,C.C,S.C}.

The invariant subspace problem is one of the most important open problems in linear analysis \cite{H}. Subspaces of Hardy space $H^p$ with unilateral shifts and composition operators have formed an important research field. For studying a family of subspaces of Hardy space for which a sure subalgebra of $H^\infty(\mathbb D)$ is multiplier algebra \cite{DPR}. Here the space $H^\infty(\mathbb D)$ consists of all the functions that are analytic and bounded on $\mathbb D$. The vector operations are the usual pointwise addition of functions and multiplication by complex scalars \cite{MR}. The norm of the function belongs to $H^\infty(\mathbb D)$ is defined by
$$\parallel f\parallel_\infty=\sup\{|f(z)|:z\in\mathbb D\}.$$
It is clear that
$$H^\infty(\mathbb D)\subset H^p\subset H^s$$
for $0<s<p<\infty$ \cite{R}. The corresponding subspace of the Hardy space is $H_{\alpha,\beta}^p$ where $(\alpha,\beta)$ is $admissible$  $pair$. The $admissible$  $pair$ $(\alpha,\beta)$ is a pair complex number that satisfies $|\alpha|^2+|\beta|^2=1$ and $|\alpha|\neq0$. The sequence of functions
$$\{\alpha+\beta z,z^2,z^3,\cdot\cdot\cdot \}$$
is orthonormal basis of subspace $H_{\alpha,\beta}^p$. In fact, $H_{\alpha,\beta}^p$ is the subspace of $H^p$. For specific example $H_{\alpha,\beta}^2$ and some details, we refer the reader to \cite{DPR}. The space $H_0^2$ consists of all functions of Hardy space $H^2$ vanishing at 0, see \cite{S.W}. The space $H_a^2$ consists of all functions of Hardy space $H^2$ vanishing at $a$. We will show that $H_{\alpha,\beta}^p$ is invariant for $C_\varphi$ if and only if $\varphi(z)=z$ for $\beta\neq0$. And other invariant subspaces forms about Hardy space $H_{\alpha,\beta}^p$ of $C_\varphi$ will be verified in this paper.

Denote by $\mathcal S(\mathbb D)$ the set of all holomorphic and bounded  functions by one in modulus on $\mathbb D$, that is,
$$\mathcal S(\mathbb D)=\{f\in H^\infty(\mathbb D):\text{ }\parallel f\parallel_\infty:
=\sup_{z\in\mathbb D}|f(z)|\leq1\}.$$
The set $\mathcal S(\mathbb D)$ is called $Schur$ $class$ and the function $f$ of $\mathcal S(\mathbb D)$ is called $Schur$ $function$. In \cite{B}, A. Beurling proved that every invariant subspace of the unilateral shift operator other than $\{0\}$ has the form $\theta H^2$ where $\theta$ is an inner function. Inner function is function $\theta\in H^\infty(\mathbb D)$ that satisfies $|\theta(z)|\leq1$ for all $z\in\mathbb D$ and $|\theta(e^{it})|=1$ a.e.on $\partial\mathbb D$. In \cite{M}, V. Matache proved that there exist a invariant closed subspace $\theta H^2$ of unilateral shift operator such that subspace $\theta H^2$ is invariant under the composition operator $C_\varphi$ where $\varphi$ is a holomorphic self-map function of $\mathbb D$. In this paper, we will prove that the Beurling type invariant subspace $\theta H^p$ is invariant for $C_\varphi$ if and only if
$$\frac{\theta\circ\varphi}{\theta}\in\mathcal S(\mathbb D).$$
In proof, we will cite Riesz Factorization Theorem \cite{D} as follow:\\
$\textbf{Riesz}$ $\textbf{factorization}$ $\textbf{theorem}$: Let $f$ be a non-zero function in $H^p$($p>0$). Then there exist a Blaschke product $B$  and a function $g\in H^p$ such that
$$g(z)\neq0 \text{ for all } z\in\mathbb D \text{ and } f=Bg.$$
moreover, if $f\in H^\infty(\mathbb D)$, then $g\in H^\infty(\mathbb D)$ and $\parallel f\parallel_\infty=\parallel g\parallel_\infty$. \\
We also will cite Schwartz Lemma \cite{R} as follow:\\
$\textbf{Schwartz}$ $\textbf{Lemma}$: Suppose $f\in H^\infty(\mathbb D)$, $\parallel f\parallel_\infty\leq1$ and $f(0)=0$. Then $|f(z)|\leq|z|$ for all $z\in\mathbb D$ and $|f'(0)|\leq1$. If $|f(z)|=|z|$ for one $z\in\mathbb D-\{0\}$, or $|f'(0)|=1$, then $f(z)=\lambda z$, where $\lambda$ is a constant with $|\lambda|=1$.

Inner function $\theta$ can be decomposed into a Blaschke product $B$ product of a singular inner function $S$, that is
$$\theta=BS.$$
The Blaschke product
$$B(z)=z^m\prod^\infty_{n=1} \frac{\overline{a}_n}{|a_n|}\frac{a_n-z}{1-\overline{a}_nz}$$
for all $z\in\mathbb D$. The sequence $\{a_n\}$ are zeros of $B$ with $\sum_{k=1}^\infty(1-|a_n|)<\infty$ and nonnegative integer $m$ is the multiplicity of $B$ equals to zero at point 0. Blaschke product is inner function. The singular inner function
$$S(z)=\lambda\exp\Big(-\int^{2\pi}_0\frac{e^{it}+z}{e^{it}-z}d\mu(t)\Big)$$
for some unimodular constant $\lambda$ and positive singular measure $\mu$ corresponding to Lebesgue measure.

For Hardy space $H^p$, let $\mathcal L(H^p)$ be the algebra of all bounded linear operators on $H^p$. If $A\in\mathcal L(H^p)$ and there exists a positive integer $M=M(T)>0$ such that
$$\parallel A^nTf\parallel\leq M\parallel A^nf\parallel$$
for all $n\in\mathbb N$ and all $f\in H^p$, then the operator $T$ belongs to Deddens algebra $\mathcal D_A$. $\mathcal D_A$ consists of all operators that satisfy above in-equation. In \cite{D.A}, Deddens puts forward the algebra under the assumption that $A$ is an invertible operator, and he obtain that
$$\sup_{n\in\mathbb N}\parallel A^nTA^{-n}\parallel<\infty.$$
The advantage about our proposed definition is that $A$ can not be an invertible operator. For details and facts about Deddens algebra, we refer the reader to \cite{L,P.D,P.S.R,PS,S.D}. Multiplication operator $M_h$ is defined to be $M_hf(z)=h(z)f(z)$ for $h\in H^{\infty}(\mathbb D)$ and $f\in H^p$. Antiderivative operator $V$ is defined to be $Vf(z)=\int^z_0f(w)dw$ for all $f\in H^p$ and all $z\in\mathbb D$. In the process, we will show that the operators $M_h$ and $V$ belongs to Deddens algebra $\mathcal D_{C_\varphi}$ where ${C_\varphi}$ is a compact composition operator. Then the invariant subspace of Deddens algebra $\mathcal D_{C_\varphi}$ must be of the form $\theta H^p$ where $\theta$ is inner function. In section 4 of this paper, our main purpose is to prove  $z^nH^p$ is nontrivial invariant subspace for Deddends algebras $\mathcal D_{C_\varphi}$ when $\varphi(0)=0$ and $\parallel\varphi\parallel_\infty<1$.

The paper is organized as follows: In Section 2 we discuss composition operators $C_\varphi$ on subspace $H_{\alpha,\beta}^p$ of Hardy space $H^p$. We will show that $JH_{\alpha,\beta}^p$ is invariant for $C_\varphi$ under some special circumstances where $J$ is an inner function. In section 3, the main theorem determining the Beurling type invariant subspace of composition operator $C_\varphi$ on Hardy space $H^p$. Finally, we consider the invariant subspace of Deddens algebra on Hardy space $H^p$.

\section{\bf Invariant subspace of composition operators on subspace $H_{\alpha,\beta}^p$}
The purpose of this section is to discuss the invariant subspace of $C_\varphi$ on subspace $H_{\alpha,\beta}^p$. The following lemma plays a very important role in subsequent proof process and mainly describes some properties of functions on subspace $H_{\alpha,\beta}^p$.

\begin{lemma}\label{lem1}
Let f be a function in $H^p$ and let $(\alpha,\beta)$ be an admissible pair. Then:
(1) The function f is in $H^p_{\alpha,\beta}$ if and only if $f(0)\beta=f'(0)\alpha$.\\
(2) Let J be an inner function such that $J(0)\neq0$, then the function f is in $JH^p_{\alpha,\beta}$ if and only if $J(0)f'(0)\alpha=J(0)f(0)\beta+J'(0)f(0)\alpha$.\\
(3) Let G be an inner function with a zero at the point 0 of multiplicity n where n=0,1,2,\ldots, then f is in $GH^p_{\alpha,\beta}$ if and only if
$$f(0)=f'(0)=\cdots=f^{(n-1)}(0)=0$$ and
$$f^{(n+1)}(0)=\Big\{(n+1)\frac{\beta}{\alpha}+\frac{G^{(n+1)}(0)}{G^{(n)}(0)}\Big\}f^{(n)}(0).$$
\end{lemma}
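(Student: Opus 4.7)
The plan is to read off each characterization from Taylor coefficients at $z=0$, using the basis $\{\alpha+\beta z,z^2,z^3,\dots\}$ to translate membership in $H^p_{\alpha,\beta}$ into a linear relation between $f(0)$ and $f'(0)$, and then to propagate this relation through multiplication by an inner function via Leibniz's rule.

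For part (1), expanding $f=c_0(\alpha+\beta z)+\sum_{k\ge 2}c_kz^k\in H^p_{\alpha,\beta}$ gives $f(0)=c_0\alpha$ and $f'(0)=c_0\beta$, so $f(0)\beta=f'(0)\alpha$ is forced. Conversely, if $f(z)=\sum_{k\ge 0}a_kz^k\in H^p$ satisfies $a_0\beta=a_1\alpha$, I set $c_0:=a_0/\alpha$ (using $|\alpha|\ne 0$) and observe that $c_0\beta=a_1$; the residual $f-c_0(\alpha+\beta z)$ vanishes at $z=0$ to order at least two, hence lies in the closed span of $\{z^2,z^3,\dots\}$, placing $f$ back in $H^p_{\alpha,\beta}$.

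For part (2), I write $f=Jg$ with $g\in H^p_{\alpha,\beta}$; the product rule gives $f(0)=J(0)g(0)$ and $f'(0)=J'(0)g(0)+J(0)g'(0)$. Substituting $g'(0)=(\beta/\alpha)g(0)$ from part (1) and eliminating $g(0)=f(0)/J(0)$ (valid because $J(0)\ne 0$) produces exactly $J(0)f'(0)\alpha=J(0)f(0)\beta+J'(0)f(0)\alpha$. For the converse, assuming $f\in JH^p$ so that $g:=f/J$ lies in $H^p$, the same algebra run backwards verifies $g(0)\beta=g'(0)\alpha$, placing $g$ in $H^p_{\alpha,\beta}$ by part (1).

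For part (3), I factor $G(z)=z^nH(z)$ where $H$ is inner with $H(0)\ne 0$, so that $f\in GH^p_{\alpha,\beta}$ is equivalent to $f(z)=z^nh(z)$ with $h=Hg$ and $g\in H^p_{\alpha,\beta}$. The factor $z^n$ forces $f(0)=\cdots=f^{(n-1)}(0)=0$, and Leibniz's rule applied at $z=0$ (where only the $j=n$ term survives) produces
\begin{align*}
f^{(n)}(0)&=n!\,h(0), & f^{(n+1)}(0)&=(n+1)!\,h'(0).
\end{align*}
The identical computation for $G=z^nH$ yields $G^{(n)}(0)=n!\,H(0)$ and $G^{(n+1)}(0)=(n+1)!\,H'(0)$, so
$$\frac{G^{(n+1)}(0)}{G^{(n)}(0)}=(n+1)\frac{H'(0)}{H(0)}.$$
Feeding $h\in HH^p_{\alpha,\beta}$ into part (2) rearranges to $h'(0)/h(0)=\beta/\alpha+H'(0)/H(0)$, and multiplying through by $(n+1)f^{(n)}(0)$ collapses everything into the claimed identity. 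The main obstacle is purely bookkeeping: keeping the factorials from Leibniz aligned so that $h'(0)/h(0)$ transforms cleanly into the ratio $G^{(n+1)}(0)/G^{(n)}(0)$, and handling the degenerate sub-case $h(0)=0$ by noting that part (2) then forces $h'(0)=0$ as well, making both sides of the displayed formula vanish.
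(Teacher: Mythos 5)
Your proposal is correct and follows essentially the same route as the paper: read off the membership condition from the Taylor coefficients in the basis $\{\alpha+\beta z,z^2,\dots\}$ for part (1), divide out (or Leibniz-expand) the inner factor for part (2), and factor $G=z^nH$ with $H(0)\neq 0$ for part (3), matching the paper's computation of $G^{(n+1)}(0)/G^{(n)}(0)=(n+1)H'(0)/H(0)$. The only cosmetic difference is that you divide by $h(0)$ and then patch the degenerate case $h(0)=0$ separately, whereas the paper keeps the relation in multiplied-out form and never divides; both arguments also share the same implicit reading that the converse directions presuppose $f$ already lies in $JH^p$ (resp.\ $GH^p$), which you at least state explicitly.
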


\begin{proof}
 For part 1, suppose the function $f\in H^p_{\alpha,\beta}$, then
 \begin{equation}\label{eq1}
 f(z)=a_{1}(\alpha+\beta z)+a_{2}z^2+a_{3}z^3+\cdots \text{ for  }z\in \mathbb D,
 \end{equation}
 hence $f(0)=a_{1}\alpha$ and $f'(0)=a_{1}\beta$. Therefore, we get that $f(0)\beta=f'(0)\alpha$. Conversely, it is not hard to see that $f$ has a power series expansion and satisfies (\ref{eq1}). If $f$ satisfies $f(0)\beta=f'(0)\alpha$, then $a_{1}=\displaystyle{\frac{f(0)}{\alpha}}=\displaystyle{\frac{f'(0)}{\beta}}$ when $\beta\neq0$ and $a_{1}=f(0)$ when $\beta=0$.

 For part 2, the function $f$ belongs to $JH^p_{\alpha,\beta}$ if and only if $f=Jg$ for some $g\in H^p_{\alpha,\beta}$, which is equivalent to $\displaystyle{\frac{f}{J}}\in H^p_{\alpha,\beta}$. And $\displaystyle{\frac{f}{J}\in H^p_{\alpha,\beta}}$ if and only if $\big(\displaystyle{\frac{f}{J}\big)(0)\beta=\big(\frac{f}{J}\big)'(0)\alpha}$, which is equivalent to $J(0)f'(0)\alpha=J(0)f(0)\beta+J'(0)f(0)\alpha$.

 For the part 3, let $G(z)=z^nJ(z)$ where $J$ is an inner function and $J(0)\neq0$, then $G^{(n+1)}(0)=(n+1)!J'(0)$ and $G^{(n)}(0)=n!J(0)$, so $\displaystyle{\frac{G^{(n+1)}(0)}{G^{(n)}(0)}=(n+1)\frac{J'(0)}{J(0)}}$. If $f\in GH^p_{\alpha,\beta}$, suppose $f(z)=z^nJ(z)g_1(z)$ for some $g_{1}\in H^p_{\alpha,\beta}$, then
 $$g_{1}(z)=b_{1}(\alpha+\beta z)+b_{2}z^2+b_{3}z^3+\cdots,$$
 then
 $$J(z)g_{1}(z)=b_{1}(J(z)\alpha+J(z)\beta z)+J(z)b_{2}z^2+J(z)b_{3}z^3+\cdots.$$
 Therefore,
 $$f(z)=b_{1}(J(z)\alpha z^n+J(z)\beta z^{n+1})+J(z)b_{2}z^{n+2}+J(z)b_{3}z^{n+3}+\cdots.$$
 It is not hard to see that
 $$f(0)=f'(0)=\cdots=f^{(n-1)}(0)=0.$$
 Taking the $n$-derivative and $n+1$-derivative of $f$ at the zero point respectively, we obtain that
 \begin{equation}\label{eq2}
 f^{(n)}(0)=n!b_1J(0)\alpha
 \end{equation}
 and
 \begin{equation}\label{eq3}
 f^{(n+1)}(0)=(n+1)!\big(b_1J'(0)\alpha+b_1J(0)\beta\big),
 \end{equation}
 multiplying both sides of (\ref{eq3}) by $J(0)\alpha$, and then combine with (\ref{eq2}) to get
 $$J(0)\alpha f^{(n+1)}(0)=(n+1)\big(J'(0)\alpha+J(0)\beta\big)f^{(n)}(0),$$
 then
 $$f^{(n+1)}(0)=\Big\{(n+1)\displaystyle{\frac{\beta}{\alpha}+(n+1)\frac{J'(0)}{J(0)}\Big\}f^{(n)}(0)}.$$ Therefore,
 $$f^{(n+1)}(0)=\Big\{(n+1)\displaystyle{\frac{\beta}{\alpha}+
 \frac{G^{(n+1)}(0)}{G^{(n)}(0)}\Big\}f^{(n)}(0)}.$$
 Conversely, if $f\in H^p$ and $f(0)=f'(0)=\cdots=f^{(n-1)}(0)=0$, there exist a function $g_{2}\in H^p$ such that $f(z)=z^ng_{2}(z)$, then we obtain that
 $$f^{(n)}(0)=n!g_2(0) \text{ and } f^{(n+1)}(0)=(n+1)!g'_2(0).$$
 And the condition
 $$f^{(n+1)}(0)=\Big\{(n+1)\frac{\beta}{\alpha}+\frac{G^{(n+1)}(0)}{G^{(n)}(0)}\Big\}f^{(n)}(0)$$ is equivalent to
 $$J(0)\alpha f^{(n+1)}(0)=(n+1)\big(J'(0)\alpha+J(0)\beta\big)f^{(n)}(0),$$
 then
 $$J(0)\alpha(n+1)!g_{2}'(0)=(n+1)\big(J'(0)\alpha+J(0)\beta\big)n!g_{2}(0).$$
 Therefore
 $$J(0)g_{2}'(0)\alpha=J(0)g_2(0)\beta+ J'(0)g_{2}(0)\alpha.$$
 Thus $g_2\in JH^p_{\alpha,\beta}$, and hence $f(z)=z^ng_2(z)\in z^nJH^p_{\alpha,\beta}=GH^p_{\alpha,\beta}.$
\end{proof}

Next, we will explore two special cases, the first is $\beta\neq0$, the second is $\alpha=1$ and $\beta=0$. The purpose of the following lemma is to verify under what conditions, the subspace $H^p_{\alpha,\beta}$ that satisfies the first two cases is invariant for $C_\varphi$.

\begin{lemma}\label{lem2}
Let $\varphi$ be a holomorphic self-map of $\mathbb D$  and let $(\alpha,\beta)$ be an admissible pair. Then:\\
(1) For $\beta\neq0$, $C_{\varphi}(H^p_{\alpha,\beta})\subseteq H^p_{\alpha,\beta}$ if and only if $\varphi(z)=z$.\\
(2) $C_{\varphi}(H^p_{1,0})\subseteq H^p_{1,0}$ if and only if either $\varphi(0)=0$ or $\varphi'(0)=0$.
\end{lemma}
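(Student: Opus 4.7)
The plan is to reduce both statements to the coefficient condition from Lemma~\ref{lem1}(1), namely that $g\in H^p_{\alpha,\beta}$ if and only if $g(0)\beta=g'(0)\alpha$, and then feed carefully chosen test functions into this relation.

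For a general $f\in H^p_{\alpha,\beta}$, write $g=C_\varphi f=f\circ\varphi$, so $g(0)=f(\varphi(0))$ and $g'(0)=f'(\varphi(0))\varphi'(0)$. Invariance is therefore equivalent to
\[
f(\varphi(0))\,\beta \;=\; f'(\varphi(0))\,\varphi'(0)\,\alpha \qquad\text{for every }f\in H^p_{\alpha,\beta}.
\]
Now, for part (1) I would first feed in the basis elements $f(z)=z^n$ for $n\ge 2$. This gives $\varphi(0)^n\beta=n\,\varphi(0)^{n-1}\varphi'(0)\alpha$ for all such $n$. If $\varphi(0)\neq 0$, cancelling $\varphi(0)^{n-1}$ yields $\varphi(0)\beta=n\varphi'(0)\alpha$ for every $n\ge 2$; comparing two different values of $n$ forces $\varphi'(0)\alpha=0$, hence $\varphi'(0)=0$ (since $\alpha\neq 0$), and then $\varphi(0)\beta=0$ contradicts $\beta\neq 0$. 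So $\varphi(0)=0$. Next I would plug in the remaining basis element $f(z)=\alpha+\beta z$, which gives $f(0)=\alpha$, $f'(0)=\beta$, so the invariance condition collapses to $\alpha\beta=\alpha\beta\,\varphi'(0)$; since $\alpha\beta\neq 0$ this forces $\varphi'(0)=1$. With $\varphi(0)=0$ and $\varphi'(0)=1$, the Schwarz lemma cited in the introduction yields $\varphi(z)=z$. The converse is trivial, since then $C_\varphi$ is the identity.

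For part (2), the pair is $\alpha=1$, $\beta=0$, and Lemma~\ref{lem1}(1) reduces membership in $H^p_{1,0}$ to the single condition $f'(0)=0$. Invariance then reads $f'(\varphi(0))\varphi'(0)=0$ for every $f\in H^p_{1,0}$. If either $\varphi(0)=0$ (so $f'(\varphi(0))=f'(0)=0$) or $\varphi'(0)=0$, the identity is automatic, giving the ``if'' direction. For the converse, test with $f(z)=z^2\in H^p_{1,0}$: the condition becomes $2\varphi(0)\varphi'(0)=0$, which immediately forces $\varphi(0)=0$ or $\varphi'(0)=0$.

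I do not expect a real obstacle here; the whole argument is a matter of picking the right test functions from the orthonormal basis $\{\alpha+\beta z,z^2,z^3,\dots\}$ furnished by the definition of $H^p_{\alpha,\beta}$. The only subtle point is the cancellation step in part~(1), where I have to be careful not to divide by $\varphi(0)$ without first separating the case $\varphi(0)=0$; this is handled by looking at two different powers $z^n$ simultaneously and deriving a contradiction in the case $\varphi(0)\neq 0$.
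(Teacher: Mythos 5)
Your proposal is correct and follows essentially the same route as the paper: reduce invariance to the coefficient identity $f(\varphi(0))\beta=f'(\varphi(0))\varphi'(0)\alpha$ via Lemma~\ref{lem1}(1), test with $z^n$ ($n\ge 2$) and $\alpha+\beta z$, and finish with the Schwarz lemma. The only (harmless) difference is that you eliminate $\varphi'(0)$ by comparing two values of $n$, where the paper lets $n\to\infty$.
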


\begin{proof}
 For part 1, suppose $C_{\varphi}(H^p_{\alpha,\beta})\subseteq H^p_{\alpha,\beta}$, then $C_\varphi f=f(\varphi(z))\in H^p_{\alpha,\beta}$ for any $f\in H^p_{\alpha,\beta}$. By the Lemma \ref{lem1}(1), we obtain that
 \begin{equation}\label{eq4}
 f\big(\varphi(0)\big)\beta=f'\big(\varphi(0)\big)\varphi'(0)\alpha .
 \end{equation}
 Taking $f(z)=z^n$ ($n\geq2$) in (\ref{eq4}), we get
 \begin{equation}\label{eq5}
 \big(\varphi(0)\big)^n\beta=n\big(\varphi(0)\big)^{n-1}\varphi'(0)\alpha,
 \end{equation}
 Assume $\varphi(0)\neq0$, then $\varphi(0)\beta=n\varphi'(0)\alpha$. So $\varphi'(0)=\displaystyle{\frac{\varphi(0)\beta}{n\alpha}}$ for $n\geq2$. Taking $n\rightarrow\infty$, we get $\varphi'(0)=0$. Taking $\varphi'(0)=0$ substitute into (\ref{eq5}), we get $\varphi(0)=0$. The contradiction shows that $\varphi(0)=0$.
 And then taking $f(z)=\alpha+\beta z$ in (\ref{eq4}), we obtain that $\big(\alpha+\beta\varphi(0)\big)\beta=\varphi'(0)\alpha\beta$. So $\varphi'(0)=1$. Therefore, by the Schwartz Lemma, we get $\varphi(z)=z$.
 Conversely, suppose $\varphi(z)=z$, we obtain that $C_\varphi f=f\in H^p_{\alpha,\beta}$ for any $f\in H^p_{\alpha,\beta}$. Thus $C_{\varphi}(H^p_{\alpha,\beta})\subseteq H^p_{\alpha,\beta}$.

 For part 2, suppose $C_{\varphi}(H^p_{1,0})\subseteq H^p_{1,0}$, then $C_\varphi f\in H^p_{1,0}$ for any $f\in H^p_{1,0}$. Then we obtain that $f'\big(\varphi(0)\big)\varphi'(0)=0$ by Lemma \ref{lem1}(1), taking $f(z)=z^2$ in it, we get either $\varphi(0)=0$ or $\varphi'(0)=0$. Conversely, for any $f\in H^p_{1,0}$, then $f'(0)=0$ by Lemma \ref{lem1}(1). If $\varphi(0)=0$, for $\alpha=1$ and $\beta=0$, we get
 $$(C_\varphi f)(0)\beta=f(\varphi(0))\beta=0=f'(0)\varphi'(0)\alpha=f'(\varphi(0))\varphi'(0)\alpha,$$ we obtain that $C_\varphi f\in H_{1,0}^p$.
 If $\varphi'(0)=0$, for $\alpha=1$ and $\beta=0$, we get
 $$(C_\varphi f)(0)\beta=f(\varphi(0))\beta=0=f'(\varphi(0))\varphi'(0)\alpha,$$
 it is not hard to see that $C_\varphi f\in H_{1,0}^p$. Thus, the condition that either $\varphi(0)=0$ or $\varphi'(0)=0$ can imply $C_\varphi (H_{0,1}^p)\in H_{1,0}^p$.
\end{proof}

The following lemma is useful.

\begin{lemma}\label{lem3}
 Let J be an inner function such that $J(0)\neq0$. Then for an admissible pair $(\alpha,\beta)$, $J(0)\beta+J'(0)\alpha=0$ if and only if $JH^p_{\alpha,\beta}=H^p_{1,0}$. Further, if J has a zero at the point 0 of multiplicity n, then $(n+1)\displaystyle{\frac{\beta}{\alpha}}+\displaystyle{\frac{J^{(n+1)}(0)}{J^{(n)}(0)}}=0$ if and only if $JH^p_{\alpha,\beta}=z^nH^p_{1,0}$.
\end{lemma}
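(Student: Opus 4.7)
The plan is to translate both sides of each biconditional into pointwise conditions on derivatives at $0$ using Lemma \ref{lem1}, and then observe algebraically that the scalar equation in the hypothesis is exactly what makes the two characterizations coincide. No new analysis is needed; the whole statement is a direct corollary of Lemma \ref{lem1}.

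For the first biconditional, Lemma \ref{lem1}(2) describes $f\in JH^p_{\alpha,\beta}$ by
\[
J(0)\alpha f'(0)=f(0)\bigl[J(0)\beta+J'(0)\alpha\bigr],
\]
while Lemma \ref{lem1}(1) applied to the pair $(1,0)$ describes $f\in H^p_{1,0}$ simply by $f'(0)=0$. Assuming $J(0)\beta+J'(0)\alpha=0$, the right-hand side of the first condition vanishes identically in $f$; since $J(0)\neq 0$ and $\alpha\neq 0$, what remains is $f'(0)=0$, which is precisely the defining condition for $H^p_{1,0}$. For the reverse direction I would test on the concrete element $J(\alpha+\beta z)\in JH^p_{\alpha,\beta}$: this function must then lie in $H^p_{1,0}$, and a short differentiation gives $[J(\alpha+\beta z)]'(0)=J'(0)\alpha+J(0)\beta$, which is therefore forced to be zero.

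The second biconditional is proved by running the same argument one order of differentiation higher, using Lemma \ref{lem1}(3) in place of Lemma \ref{lem1}(2). Writing $J=z^nJ_0$ with $J_0$ inner and $J_0(0)\neq 0$, the space $z^n H^p_{1,0}$ consists of the $f$ with $f(0)=\cdots=f^{(n-1)}(0)=0$ and $f^{(n+1)}(0)=0$ (obtained by differentiating $f=z^n h$ with $h'(0)=0$). Lemma \ref{lem1}(3) describes $JH^p_{\alpha,\beta}$ by the same vanishing of low-order derivatives together with the proportionality
\[
f^{(n+1)}(0)=\Bigl\{(n+1)\frac{\beta}{\alpha}+\frac{J^{(n+1)}(0)}{J^{(n)}(0)}\Bigr\}f^{(n)}(0).
\]
Under the stated hypothesis the bracket is zero, forcing $f^{(n+1)}(0)=0$ and matching $z^n H^p_{1,0}$. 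Conversely, testing on $z^n J_0(\alpha+\beta z)$, whose $n$-th derivative at $0$ equals $n!J_0(0)\alpha\neq 0$, pins down the bracket itself to be zero.

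The only delicate step in either direction is the converse: one has to point to a specific element of $JH^p_{\alpha,\beta}$ whose $n$-th derivative at $0$ is nonzero, so that the scalar equation on $J$ is actually forced rather than automatically satisfied. The natural choices $J(\alpha+\beta z)$ and $z^n J_0(\alpha+\beta z)$ do the job because $\alpha\neq 0$. Beyond this small piece of bookkeeping, the argument is just a clean translation of Lemma \ref{lem1} into the \emph{if and only if} form asserted here.
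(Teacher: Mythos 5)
Your proposal is correct and follows essentially the same route as the paper: both directions are obtained by translating membership in $JH^p_{\alpha,\beta}$ and in $H^p_{1,0}$ (resp. $z^nH^p_{1,0}$) into derivative conditions at $0$ via Lemma \ref{lem1}, and the converse is forced by testing on $J(\alpha+\beta z)$. The only cosmetic difference is in the second half, where the paper reduces to the first half by writing $J=z^nJ_1$ while you invoke Lemma \ref{lem1}(3) directly; the underlying computation is identical.
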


\begin{proof}
 For the first half, suppose $J(0)\beta+J'(0)\alpha=0$. Notice that $f\in JH^p_{\alpha,\beta}$ if and only if
 $$J(0)f'(0)\alpha=J(0)f(0)\beta+J'(0)f(0)\alpha.$$
 Then $f'(0)=0$ since $J(0)\beta+J'(0)\alpha=0$ and $J(0)\neq0$. By Lemma \ref{lem1}(1), we get $f\in H^p_{1,0}$. So $f\in JH^p_{\alpha,\beta}$ if and only if $f\in H^p_{1,0}$. Therefore, $JH^p_{\alpha,\beta}=H^p_{1,0}$. Conversely, suppose $JH^p_{\alpha,\beta}=H^p_{1,0}$, let $g(z)=(\alpha+\beta z)\in H^p_{\alpha,\beta}$, it is not hard to see that $Jg\in H^p_{1,0}$, then $(Jg)'(0)=0$, then $J(0)\beta+J'(0)\alpha=0$.

 For the latter part, we know that $J$ has a zero at the point 0 of multiplicity $n$. Then let $J(z)=z^nJ_1(z)$ and $J_1(0)\neq0$, we get $J^{(n)}(0)=n!J_1(0)$ and $J^{(n+1)}(0)=(n+1)!J'_1(0)$. Hence $(n+1)\displaystyle{\frac{\beta}{\alpha}+\frac{J^{(n+1)}(0)}{J^{(n)}(0)}}=0$ is equivalent to $$(n+1)\Big\{\frac{\beta}{\alpha}+\frac{J'_1(0)}{J_1(0)}\Big\}=0,$$
 then $J_1(0)\beta+J'_1(0)\alpha=0$ if and only if $J_1H^p_{\alpha,\beta}=H^p_{1,0}$ if and only if $z^nJ_1H^p_{\alpha,\beta}=z^nH^p_{1,0}$, that is, $JH^p_{\alpha,\beta}=z^nH^p_{1,0}$.
\end{proof}

The next two theorems give the conditions of $JH_{\alpha,\beta}^p$ under composition operator $C_\varphi$ for special types of function $J$ and $\varphi$.

\begin{theorem}\label{the1}
Let $\beta\neq0$ and $\varphi(z)=z^k$.\\
(1) If $n=1 $ and $k\in\mathbb N\setminus\{2\}$, then the subspace $zH^p_{\alpha,\beta}$ is invariant under $C_\varphi$.\\
(2) If $n\geq2$ and $k\in\mathbb N$, then the subspace $z^nH^p_{\alpha,\beta}$ is invariant under $C_\varphi$
\end{theorem}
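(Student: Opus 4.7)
The plan is to apply Lemma~\ref{lem1}(3) with $G(z)=z^n$, which corresponds to $J\equiv 1$ so that $G^{(n+1)}(0)/G^{(n)}(0)=0$. Membership of a function $h\in H^p$ in $z^nH^p_{\alpha,\beta}$ then reduces to two conditions: the vanishing $h(0)=h'(0)=\cdots=h^{(n-1)}(0)=0$, together with the single normalization
\[
h^{(n+1)}(0)=(n+1)\frac{\beta}{\alpha}\,h^{(n)}(0).
\]
Given $f\in z^nH^p_{\alpha,\beta}$, I would set $h=C_\varphi f$, i.e.\ $h(z)=f(z^k)$, and expand $f(z)=\sum_{j\geq n}a_j z^j$ so that $h(z)=\sum_{j\geq n}a_j z^{jk}$; the Taylor coefficients of $h$ are supported on multiples of $k$. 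The strategy is to verify both conditions above by locating the first nonzero Taylor coefficient of $h$ and checking that, in each listed case, it sits at an index $\geq n+2$, so both conditions collapse to $0=0$.

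For part~(2) with $n\geq 2$: if $k=1$ then $h=f$ and there is nothing to prove; if $k\geq 2$ the first potentially nonzero coefficient of $h$ occurs at index $nk\geq 2n\geq n+2$, so $h^{(j)}(0)=0$ for every $j\in\{0,1,\ldots,n+1\}$. For part~(1) with $n=1$ and $k\in\mathbb N\setminus\{2\}$: $k=1$ is again trivial, and for $k\geq 3$ the Taylor coefficients of $h$ live on the indices $k,2k,\ldots$, all of which are $\geq 3=n+2$, so $h(0)=h'(0)=h''(0)=0$ and the required identity $h''(0)=2\tfrac{\beta}{\alpha}h'(0)$ reduces to $0=0$.

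I do not expect a serious obstacle; the argument is careful bookkeeping of where the Taylor coefficients of $f(z^k)$ can be nonzero. What is worth flagging is that the exclusion $k=2$ in part~(1) is sharp rather than a technical artifact: when $k=2$ and $n=1$, the $z^2$-coefficient of $h$ equals $a_1$, and if one writes $f(z)=zg(z)$ with $g(z)=c_1(\alpha+\beta z)+\cdots\in H^p_{\alpha,\beta}$, then $a_1=c_1\alpha$ is generically nonzero, while the required identity $h''(0)=2\tfrac{\beta}{\alpha}h'(0)=0$ would force $a_1=0$. This explains both why the case $k=2$ must be excluded in~(1) and why no analogous exclusion is needed in~(2).
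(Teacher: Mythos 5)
Your proof is correct and follows essentially the same route as the paper's: both arguments come down to the observation that the Taylor coefficients of $f(z^k)$ are supported on the indices $nk,(n+1)k,\dots$, which in every listed case all lie at or beyond $n+2$, so the membership criterion degenerates to $0=0$. The only cosmetic difference is that you invoke Lemma~\ref{lem1}(3) with $G(z)=z^n$ directly on $C_\varphi f$, while the paper factors $z^n$ out of $C_\varphi f$ explicitly and checks the quotient against the Lemma~\ref{lem1}(1) criterion; your closing remark on why $k=2$ must be excluded in part~(1) matches the paper's own counterexample stated immediately after the theorem.
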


\begin{proof}
 For part 1, suppose $k=1$, then $\varphi(z)=z$. It is not hard to see that $C_\varphi(zH^p_{\alpha,\beta})\subseteq zH^p_{\alpha,\beta}$. Suppose $k\geq3$. Let $f(z)=zg(z)\in zH^p_{\alpha,\beta}$ for $g\in H^p_{\alpha,\beta}$, then
 $$g(z)=a_1(\alpha+\beta z)+a_2z^2+a_3z^3+\cdots,$$
 then
 $$f(z)=z\{a_1(\alpha+\beta z)+a_2z^2+a_3z^3+\cdots\}.$$
 Therefore
 \begin{align*}
 (C_\varphi f)(z)=&f(z^k)\\
 =&z^k\{a_1(\alpha+\beta z^k)+a_2z^{2k}+a_3z^{3k}+\cdots\}\\
 =&z\{a_1(\alpha z^{k-1}+\beta z^{2k-1})+a_2z^{3k-1}+a_3z^{4k-1}+\cdots\}.
 \end{align*}
 Hence $C_\varphi f\in zH^p_{\alpha,\beta}$, as $k\geq3$. Therefore  $C_\varphi(zH^p_{\alpha,\beta})\subseteq zH^p_{\alpha,\beta}$.

 For part 2, suppose $n\geq2$ and $k\geq1$. Let $f(z)=z^ng(z)\in z^nH^p_{\alpha,\beta}$ for $g\in H^p_{\alpha,\beta}$, then
 $$g(z)=b_1(\alpha+\beta z)+b_2z^2+b_3z^3+\cdots,$$ then
 $$f(z)=z^n\{b_1(\alpha+\beta z)+b_2z^2+b_3z^3+\cdots\}.$$ Therefore,
 \begin{align*}
 (C_\varphi f)(z)=&f(z^k)\\
 =&z^{nk}\{b_1(\alpha+\beta z^k)+b_2z^{2k}+b_3z^{3k}+\cdots\}\\
 =&z^{n}\{b_1(\alpha z^{(k-1)n}+\beta z^{(k-1)n+k})+b_2z^{(k-1)n+2k}+b_3z^{(k-1)n+3k}+\cdots\}.
 \end{align*}
 Thus $C_\varphi f\in z^nH^p_{\alpha,\beta}$. Therefore, $C_\varphi(z^nH^p_{\alpha,\beta})\subseteq z^nH^p_{\alpha,\beta}$.
\end{proof}

 We consider whether the subspace $zH^p_{\alpha,\beta}$ is invariant under $C_\varphi$ for $\varphi(z)=z^2$. Let $g(z)=(\alpha+\beta z)$, then $f(z)=zg(z)=z(\alpha+\beta z)\in zH^p_{\alpha,\beta}$. Then
 $$(C_\varphi f)(z)=C_\varphi\big(z(\alpha+\beta z)\big)=z^2(\alpha+\beta z^2)=z(\alpha z+\beta z^3)\notin zH^p_{\alpha,\beta}.$$
 Therefore, $zH^p_{\alpha,\beta}$ is not invariant under $C_\varphi$ for $\varphi(z)=z^2$.

\begin{theorem}\label{the2}
 Let $\varphi$ be a holomorphic self-map of $\mathbb D$ and $\lambda>0$. Then:\\
 (1) If $\lambda\neq\frac{\beta}{2\alpha}$, then $e^{\lambda\frac{z+1}{z-1}}H^p_{\alpha,\beta}$ is invariant under $C_\varphi$ if and only if $\varphi(z)=z$.\\
 (2)If $\lambda=\frac{\beta}{2\alpha}$, then $e^{\lambda\frac{z+1}{z-1}}H^p_{\alpha,\beta}$ is invariant under $C_\varphi$ if and only if either $\varphi(0)=0$ or $\varphi'(0)=0$.
\end{theorem}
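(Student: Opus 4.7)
The overall plan is to reduce the invariance question to a condition at $z = 0$ via Lemma~\ref{lem1}(2), then split into cases according to whether the constant $\gamma := \beta - 2\lambda\alpha$ vanishes. First I would compute that for $J(z) = e^{\lambda(z+1)/(z-1)}$ one has $J(0) = e^{-\lambda}$ and $J'(0) = -2\lambda e^{-\lambda}$, so $J(0)\beta + J'(0)\alpha = e^{-\lambda}\gamma$. Lemma~\ref{lem1}(2) then reads: $f \in JH^p_{\alpha,\beta}$ iff $f'(0)\alpha = f(0)\gamma$, and applying the same characterization to $C_\varphi f$ converts the inclusion $C_\varphi(JH^p_{\alpha,\beta}) \subseteq JH^p_{\alpha,\beta}$ into the pointwise requirement
$$f'(\varphi(0))\varphi'(0)\alpha = f(\varphi(0))\gamma \quad \text{for every } f \in JH^p_{\alpha,\beta}.$$

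Part (2) is then immediate. If $\lambda = \beta/(2\alpha)$ then $\gamma = 0$, so $J(0)\beta + J'(0)\alpha = 0$, and Lemma~\ref{lem3} gives $JH^p_{\alpha,\beta} = H^p_{1,0}$. The invariance question reduces to $C_\varphi(H^p_{1,0}) \subseteq H^p_{1,0}$, and Lemma~\ref{lem2}(2) supplies exactly the criterion $\varphi(0) = 0$ or $\varphi'(0) = 0$.

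For part (1), where $\gamma \neq 0$, I would test the invariance identity against two families of elements of $JH^p_{\alpha,\beta}$. Substituting $f(z) = J(z) z^n$ with $n \geq 2$ (recall $z^n \in H^p_{\alpha,\beta}$) and dividing by $J(\varphi(0))\varphi(0)^{n-1}$, nonzero under the provisional assumption $\varphi(0) \neq 0$ (since singular inner functions vanish nowhere on $\mathbb D$), the equation becomes
$$\frac{J'(\varphi(0))}{J(\varphi(0))}\varphi(0)\varphi'(0)\alpha + n\,\varphi'(0)\alpha = \varphi(0)\gamma$$
for all $n \geq 2$; comparing two distinct values of $n$ forces $\varphi'(0) = 0$, and the surviving equation $0 = \varphi(0)\gamma$ then contradicts $\gamma \neq 0$. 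Hence $\varphi(0) = 0$. Substituting next $f(z) = J(z)(\alpha + \beta z)$, for which $f(0) = J(0)\alpha$ and $f'(0) = J(0)\gamma$, the invariance condition collapses to $\varphi'(0) = 1$, and the Schwarz Lemma pins $\varphi$ down to the identity; the converse is trivial. The main subtlety is the case $\varphi(0) \neq 0$: the family $\{J z^n\}_{n \geq 2}$ is chosen precisely so that its $n$-dependence separates the two unknowns $\varphi(0)$ and $\varphi'(0)$ in a single stroke.
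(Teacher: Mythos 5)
Your proposal is correct and, for part (1), follows essentially the paper's skeleton: the paper obtains its key relation (its equation (8)) by writing $C_\varphi(Jf)=Jg$ with $g=e^{\lambda(\frac{\varphi(z)+1}{\varphi(z)-1}-\frac{z+1}{z-1})}(f\circ\varphi)$ and computing $g(0),g'(0)$ by hand, which is exactly the condition you extract from Lemma~\ref{lem1}(2) applied to $J(f\circ\varphi)$ with $J(0)=e^{-\lambda}$, $J'(0)=-2\lambda e^{-\lambda}$; the test functions are the same ($z^n$ and $\alpha+\beta z$ in the $H^p_{\alpha,\beta}$ slot). The one mechanical difference there is that the paper divides by $n$ and lets $n\to\infty$ to force $\varphi'(0)=0$, whereas you subtract the equations for two values of $n$ — marginally cleaner, same content. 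Part (2) is where you genuinely diverge: the paper repeats the whole computation with $\lambda=\beta/(2\alpha)$ and then verifies both converse cases by direct evaluation of $g(0)$ and $g'(0)$, while you observe that $\lambda=\beta/(2\alpha)$ is precisely the degeneration $J(0)\beta+J'(0)\alpha=0$, so Lemma~\ref{lem3} collapses $JH^p_{\alpha,\beta}$ to $H^p_{1,0}$ and part (2) becomes a literal restatement of Lemma~\ref{lem2}(2). That reduction is shorter and makes the dichotomy between the two parts conceptually transparent. One caveat worth recording: your sufficiency in part (2) inherits whatever one believes about Lemma~\ref{lem3}'s identification $JH^p_{\alpha,\beta}=H^p_{1,0}$ (which, like Lemma~\ref{lem1}(2), silently treats membership in $JH^p_{\alpha,\beta}$ as a condition on the first two Taylor coefficients only, ignoring divisibility by $J$); the paper's hand computation for the converse carries the same implicit assumption, so you are not introducing any gap the paper does not already have.
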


\begin{proof}
 Notice that $e^{\lambda\frac{z+1}{z-1}}f\in e^{\lambda\frac{z+1}{z-1}}H^p_{\alpha,\beta}$ for any $f\in H^p_{\alpha,\beta}$, then
 $$C_\varphi\big(e^{\lambda\frac{z+1}{z-1}}f(z)\big)
 =e^{\lambda\frac{\varphi(z)+1}{\varphi(z)-1}}f\big(\varphi(z)\big).$$
 Let $g(z)=e^{\lambda(\frac{\varphi(z)+1}{\varphi(z)-1}-\frac{z+1}{z-1})}f\big(\varphi(z)\big)$, then
 $$e^{\lambda\frac{z+1}{z-1}}g(z)=e^{\lambda\frac{\varphi(z)+1}{\varphi(z)-1}}f\big(\varphi(z)\big)
 =C_\varphi\big(e^{\lambda\frac{z+1}{z-1}}f(z)\big).$$
 Thus, $g(z)\in H^p_{\alpha,\beta}$ if and only if $e^{\lambda\frac{z+1}{z-1}}g(z)\in e^{\lambda\frac{z+1}{z-1}}H^p_{\alpha,\beta}$ if and only if $C_\varphi(e^{\lambda\frac{z+1}{z-1}}f(z))\in e^{\lambda\frac{z+1}{z-1}}H^p_{\alpha,\beta}$ if and only if $C_\varphi(e^{\lambda\frac{z+1}{z-1}}H^p_{\alpha,\beta})\subseteq e^{\lambda\frac{z+1}{z-1}} H^p_{\alpha,\beta}$.

 Suppose $e^{\lambda\frac{z+1}{z-1}}H^p_{\alpha,\beta}$ is invariant under $C_\varphi$, that is  $C_\varphi(e^{\lambda\frac{z+1}{z-1}}H^p_{\alpha,\beta})\subseteq e^{\lambda\frac{z+1}{z-1}} H^p_{\alpha,\beta}$, which implies that $g\in H^p_{\alpha,\beta}$. Then $g(0)\beta=g'(0)\alpha$ by Lemma \ref{lem1}(1). It is not hard to see that
 \begin{equation}\label{eq6}
 g(0)=e^{2\lambda(\frac{\varphi(0)}{\varphi(0)-1})}f(\varphi(0))
 \end{equation}
 and
 $$g'(z)=e^{\lambda(\frac{\varphi(z)+1}{\varphi(z)-1}-\frac{z+1}{z-1})}
 \Big[2\lambda\Big(\frac{1}{(z-1)^2}-\frac{\varphi'(z)}{(\varphi(z)-1)^2}\Big)f(\varphi(z))
 +f'(\varphi(z))\varphi'(z)\Big],$$
 then
 \begin{equation}\label{eq7}
 g'(0)=e^{2\lambda\big(\frac{\varphi(0)}{\varphi(0)-1}\big)}
 \Big[2\lambda\Big(1-\frac{\varphi'(0)}{(\varphi(0)-1)^2}\Big)f(\varphi(0))
 +f'(\varphi(0))\varphi'(0)\Big].
 \end{equation}
 So substituting (\ref{eq6}) and (\ref{eq7}) into $g(0)\beta=g'(0)\alpha$, we get
 \begin{equation}\label{eq8}
 f(\varphi(0))\beta
 =\Big[2\lambda\Big(1-\frac{\varphi'(0)}{(\varphi(0)-1)^2}\Big)f(\varphi(0))
 +f'(\varphi(0))\varphi'(0)\Big]\alpha.
 \end{equation}

 Firstly, for $\lambda\neq\frac{\beta}{2\alpha}$, taking $f(z)=z^n$ ($n\geq2$) in (\ref{eq8}), we get
 \begin{equation}\label{eq9}
 (\varphi(0))^n\beta=\Big[2\lambda\Big(1-\frac{\varphi'(0)}{\big(\varphi(0)-1\big)^2}\Big)(\varphi(0))^n
 +n(\varphi(0))^{(n-1)}\varphi'(0)\Big]\alpha.
 \end{equation}
 Assume $\varphi(0)\neq0$, then
 \begin{equation}\label{eq10}
 \frac{1}{n}\varphi(0)\beta=\Big[\frac{1}{n}2\lambda\Big(1-\frac{\varphi'(0)}{(\varphi(0)-1)^2}\Big)\varphi(0)
 +\varphi'(0)\Big]\alpha.
 \end{equation}
 Taking $n\rightarrow\infty$, then $\varphi'(0)=0$. Let $\varphi'(0)=0$ take in (\ref{eq9}), we get
 $$\varphi(0)\beta=\varphi(0)2\lambda\alpha.$$
 Then $\varphi(0)=0$ since $\lambda\neq\frac{\beta}{2\alpha}$. The contradiction shows that $\varphi(0)=0$. Therefore,
 \begin{equation}\label{eq11}
 f(0)\beta=\Big[2\lambda\big(1-\varphi'(0))f(0)+f'(0)\varphi'(0)\Big]\alpha.
 \end{equation}
 And then taking $f(z)=\alpha+\beta z$ in (\ref{eq11}), we get
 $$\beta=2\lambda\alpha\big(1-\varphi'(0)\big)+\beta\varphi'(0),$$
 which implies $\varphi'(0)=1$ since $\lambda\neq\frac{\beta}{2\alpha}$. According to the Schwartz Lemma, we get that $\varphi(z)=z$.

 After that, for $\lambda=\frac{\beta}{2\alpha}$, taking it in (\ref{eq8}), then
 \begin{equation}\label{eq12}
 f(\varphi(0))\beta
 =\Big(1-\frac{\varphi'(0)}{(\varphi(0)-1)^2}\Big)f(\varphi(0))\beta
 +f'(\varphi(0))\varphi'(0)\alpha.
 \end{equation}
 Taking $f(z)=z^n$ ($n\geq2$) in (\ref{eq12}), we get
 \begin{equation}\label{eq13}
 (\varphi(0))^n\beta
 =\Big(1-\frac{\varphi'(0)}{(\varphi(0)-1)^2}\Big)\big(\varphi(0)\big)^n\beta
 +n\big(\varphi(0)\big)^{n-1}\varphi'(0)\alpha.
 \end{equation}
 If $\varphi(0)\neq0$, we have
 \begin{equation}\label{eq14}
 \frac{\beta}{n}\varphi(0)
 =\Big(1-\frac{\varphi'(0)}{(\varphi(0)-1)^2}\Big)\varphi(0)\frac{\beta}{n}
 +\varphi'(0)\alpha.
 \end{equation}
 Taking $n\rightarrow\infty$, we get $\varphi'(0)=0$.

 Conversely, for $\lambda\neq\frac{\beta}{2\alpha}$. Suppose $\varphi(z)=z$, then $C_\varphi(e^{\lambda\frac{z+1}{z-1}}f)=e^{\lambda\frac{z+1}{z-1}}f\in e^{\lambda\frac{z+1}{z-1}}H^p_{\alpha,\beta}$ for any $f\in H^p_{\alpha,\beta}$. Thus
 $C_\varphi(e^{\lambda\frac{z+1}{z-1}}H^p_{\alpha,\beta})\subseteq e^{\lambda\frac{z+1}{z-1}}H^p_{\alpha,\beta}$. For $\lambda=\frac{\beta}{2\alpha}$.
 Suppose $\varphi(0)=0$, then $g(0)=f(0)$ and $g'(0)=2\lambda f(0)(1-\varphi'(0))+f'(\varphi(0))\varphi'(0)$. Therefore,
 \begin{align*}
 \alpha g'(0)=&2\alpha\lambda f(0)(1-\varphi'(0))+\alpha f'(\varphi(0))\varphi'(0)\\
 =&\beta f(0)(1-\varphi'(0))+\alpha f'(0)\varphi'(0)\\
 =&\beta f(0)\\
 =&\beta g(0)
 \end{align*}
 Hence, $g\in H^p_{\alpha,\beta}$, then we get $C_\varphi(e^{\lambda\frac{z+1}{z-1}}H^p_{\alpha,\beta})\subseteq H^p_{\alpha,\beta}$. If $\varphi'(0)=0$, then
 \begin{align*}
 \alpha g'(0)=&e^{2\lambda(\frac{\varphi(0)}{\varphi(0)-1})}2\lambda \alpha f(\varphi(0))\\
 =&\beta e^{2\lambda(\frac{\varphi(0)}{\varphi(0)-1})}f(\varphi(0))\\
 =&\beta g(0)
 \end{align*}
 Therefore $g\in H^p_{\alpha,\beta}$, then we get $C_\varphi(e^{\lambda\frac{z+1}{z-1}}H^p_{\alpha,\beta})\subseteq H^p_{\alpha,\beta}$.
\end{proof}

Theorems \ref{the1} and \ref{the2} are examples of more general results about the invariance of $JH^p_{\alpha,\beta}$ under $C_\varphi$. Due to the intuitive value, we choose to keep special cases as independent results.

Next we will pay attention to the equivalent conditions of $JH^p_{\alpha,\beta}$ as an invariant subspace under $C_\varphi$ in general.

\begin{theorem}\label{the3}
 Let $\varphi:\mathbb D\longrightarrow\mathbb D$ be a holomorphic and different from the identity map, $(\alpha,\beta)$ be an admissible pair and J be an inner function such that $JH^p_{\alpha,\beta}\neq H^p_{1,0}$. Suppose that $0\notin \mathcal Z_J$, the set of zeros of J. Then
 $$C_\varphi(JH^p_{\alpha,\beta})\subseteq JH^p_{\alpha,\beta} \text{ if and only if } \varphi(\mathcal Z_J)\subseteq \mathcal Z_J \text{ and } J\circ\varphi\in z^2H^\infty.$$
\end{theorem}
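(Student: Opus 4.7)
The plan is to prove both implications separately, leveraging Lemmas~\ref{lem1} and~\ref{lem3}.

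\emph{Forward direction.} Assume $C_\varphi(JH^p_{\alpha,\beta}) \subseteq JH^p_{\alpha,\beta}$. To get $\varphi(\mathcal Z_J) \subseteq \mathcal Z_J$, fix $a \in \mathcal Z_J$ and choose $g \in H^p_{\alpha,\beta}$ with $g(\varphi(a)) \neq 0$; this is possible because $\alpha \neq 0$ and the orthonormal basis $\{\alpha + \beta z, z^2, z^3, \dots\}$ is rich enough. Then $f = Jg \in JH^p_{\alpha,\beta}$ and by hypothesis $f \circ \varphi \in JH^p_{\alpha,\beta} \subseteq JH^p$, so $f \circ \varphi$ must vanish at $a$. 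Since $(f \circ \varphi)(a) = J(\varphi(a))\, g(\varphi(a))$ and $g(\varphi(a)) \neq 0$, one concludes $J(\varphi(a)) = 0$, i.e., $\varphi(a) \in \mathcal Z_J$.

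To obtain $J \circ \varphi \in z^2 H^\infty$, apply Lemma~\ref{lem1}(2) to $f \circ \varphi \in JH^p_{\alpha,\beta}$:
$$J(0)(f \circ \varphi)'(0)\alpha = J(0)(f \circ \varphi)(0)\beta + J'(0)(f \circ \varphi)(0)\alpha.$$
Substituting $f = Jg$ and expanding in terms of $A = g(\varphi(0))$ and $B = g'(\varphi(0))$, the identity becomes a linear relation in $(A,B) \in \mathbb C^2$. As $g$ ranges over $H^p_{\alpha,\beta}$, the pair $(A,B)$ achieves every value in $\mathbb C^2$, so both coefficients vanish. The coefficient of $B$ gives $\alpha J(0) J(\varphi(0)) \varphi'(0) = 0$. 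Combining the $A$-coefficient relation with the hypothesis $JH^p_{\alpha,\beta} \neq H^p_{1,0}$, which by Lemma~\ref{lem3} means $J(0)\beta + J'(0)\alpha \neq 0$, one deduces $J(\varphi(0)) = 0$ and $J'(\varphi(0))\varphi'(0) = 0$, i.e., $(J \circ \varphi)(0) = (J \circ \varphi)'(0) = 0$.

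\emph{Reverse direction.} Assume $\varphi(\mathcal Z_J) \subseteq \mathcal Z_J$ (with multiplicities) and $J \circ \varphi \in z^2 H^\infty$. For $f = Jg \in JH^p_{\alpha,\beta}$, set $\Theta = (J \circ \varphi)/J$. The zero-set inclusion makes $\Theta$ holomorphic on $\mathbb D$, and since $|J \circ \varphi| \leq 1$ while $|J| = 1$ a.e.\ on $\partial \mathbb D$, the maximum principle yields $\Theta \in H^\infty$. Thus $h := (f \circ \varphi)/J = \Theta \cdot (g \circ \varphi) \in H^p$. Writing $J \circ \varphi = z^2 \psi$ with $\psi \in H^\infty$ and using $J(0) \neq 0$, one sees $\Theta(0) = \Theta'(0) = 0$, hence $h(0) = h'(0) = 0$, so trivially $h(0)\beta = h'(0)\alpha$, and $h \in H^p_{\alpha,\beta}$. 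Therefore $f \circ \varphi = Jh \in JH^p_{\alpha,\beta}$.

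The main obstacle is verifying $\Theta \in H^\infty$: even once $\varphi(\mathcal Z_J) \subseteq \mathcal Z_J$ as sets, one must also control multiplicities of zeros and, if $J$ carries a nontrivial singular inner factor, ensure the singular part of $J$ is dominated by that of $J \circ \varphi$. A careful inner-outer factorization argument is needed to show that $J$ genuinely divides $J \circ \varphi$ in the sense of inner functions. The algebraic bookkeeping in the forward direction, while routine, is the other place where arithmetic slips can creep in.
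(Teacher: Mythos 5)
Your forward direction has a genuine gap at its central step. You assert that, as $g$ ranges over $H^p_{\alpha,\beta}$, the pair $(A,B)=\big(g(\varphi(0)),g'(\varphi(0))\big)$ fills out all of $\mathbb C^2$, so that both coefficients of the linear relation vanish. This is correct when $\varphi(0)=w\neq0$ (already $g=a_2z^2+a_3z^3$ suffices, the relevant determinant being $3w^4-2w^4=w^4\neq0$), but it is false when $\varphi(0)=0$: writing $g=a_1(\alpha+\beta z)+a_2z^2+\cdots$ gives $\big(g(0),g'(0)\big)=(a_1\alpha,a_1\beta)$, so the attainable pairs form only the line $\mathbb C\cdot(\alpha,\beta)$, and on that line the two coefficients need not vanish separately. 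Indeed they cannot both vanish in that case, since your conclusion $J(\varphi(0))=0$ is incompatible with $\varphi(0)=0$ and $0\notin\mathcal Z_J$; so the case $\varphi(0)=0$ must be excluded by a separate argument, and your surjectivity claim cannot be the tool that excludes it without circularity. This is precisely where the hypothesis that $\varphi$ is not the identity --- which your proof never invokes --- is essential: for $\varphi(z)=z$ the subspace is trivially invariant, yet $J\circ\varphi=J\notin z^2H^\infty$ because $J(0)\neq0$, so the forward implication is false without that hypothesis. The paper closes this case by taking $g=\alpha+\beta z$ under the assumption $\varphi(0)=0$, which together with $J(0)\beta+J'(0)\alpha\neq0$ (Lemma \ref{lem3} and $JH^p_{\alpha,\beta}\neq H^p_{1,0}$) forces $\varphi'(0)=1$ and hence $\varphi(z)=z$ by the Schwarz lemma, a contradiction; only then, with $\varphi(0)\neq0$ in hand, does it test against $g(z)=z^n$ and let $n\to\infty$ to obtain $J(\varphi(0))\varphi'(0)=0$ and ultimately $(J\circ\varphi)(0)=(J\circ\varphi)'(0)=0$. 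Your proof of $\varphi(\mathcal Z_J)\subseteq\mathcal Z_J$ is fine and matches the paper's.

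On the reverse direction: you correctly identify that the substantive issue is whether $J$ divides $J\circ\varphi$ as inner functions, but your write-up assumes the multiplicity control (``with multiplicities'') rather than deriving it from the stated hypotheses, and the ``maximum principle'' step is not rigorous as phrased --- a holomorphic function whose radial limits have modulus at most $1$ a.e.\ need not be bounded (consider $\exp\big(\frac{1+z}{1-z}\big)$); one needs the Riesz-factorization argument of Theorem \ref{the6}. For comparison, the paper's own converse bypasses the divisibility question entirely: it verifies only $(f\circ\varphi)(0)=(f\circ\varphi)'(0)=0$ and cites Lemma \ref{lem1}(2), so your route is the more careful one, but as submitted it is incomplete there as well.
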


\begin{proof}
 Suppose that $C_\varphi (JH^p_{\alpha,\beta})\subseteq JH^p_{\alpha,\beta}$.

 Firstly, we will show that $\varphi(\mathcal Z_J)\subseteq \mathcal Z_J$. Suppose to the contrary that $\varphi(a)\notin \mathcal Z_J$ for some $a\in \mathcal Z_J$, that is, $J\big(\varphi(a)\big)\neq0$. Let $f=Jg\in JH^p_{\alpha,\beta}$ for $g\in H^p_{\alpha,\beta}$, then $C_\varphi f\in JH^p_{\alpha,\beta}$. Then
 $$(C_\varphi f)(a)=f\big(\varphi(a)\big)=J\big(\varphi(a)\big)g\big(\varphi(a)\big)=0.$$
 Thus $g\big(\varphi(a)\big)=0$ since $J\big(\varphi(a)\big)\neq0$. Taking $g(z)=\alpha+\beta z$, we obtain that
 $$\alpha+\beta\varphi(a)=0.$$
 Assume $\beta=0$, we get $\alpha=0$, then $(\alpha,\beta)$ is not a admissible pair. Hence $\beta\neq0$ and $\varphi(a)=-\displaystyle{\frac{\alpha}{\beta}}$. And taking $g(z)=z^2$, then $\displaystyle{\frac{\alpha^2}{\beta^2}}=0$ since $g\big(\varphi(a)\big)=g(-\displaystyle{\frac{\alpha}{\beta})}=0$. Thus $\alpha=0$, a contradiction. Therefore, $J\big(\varphi(a)\big)=0$, that is $\varphi(\mathcal Z_J)\subseteq \mathcal Z_J$.

 Next, we will show that $J\circ\varphi\in z^2H^\infty$. Let $f=Jg\in H^p_{\alpha,\beta}$ for $g\in H^p_{\alpha,\beta}$, then $C_\varphi f\in JH^p_{\alpha,\beta}$. According to Lemma \ref{lem1}(2), we get
 $$J(0)(f\circ\varphi)'(0)\alpha=J(0)(f\circ\varphi)(0)\beta+J'(0)(f\circ\varphi)(0)\alpha, $$
 which is equivalent to
 \begin{equation}\label{eq15}
 (f\circ\varphi)'(0)=\Big\{\frac{\beta}{\alpha}+\frac{J'(0)}{J(0)}\Big\}(f\circ\varphi)(0).
 \end{equation}
 And then
 \begin{equation}\label{eq16}
 \Big[J'\big(\varphi(0)\big)g\big(\varphi(0)\big)+J\big(\varphi(0)\big)g'\big(\varphi(0)\big)\Big]\varphi'(0)
 =\Big\{\frac{\beta}{\alpha}+\frac{J'(0)}{J(0)}\Big\}J(\varphi(0))g(\varphi(0)).
 \end{equation}
 Assume $\varphi(0)=0$, then
 \begin{equation}\label{eq17}
 \Big[J'(0)g(0)+J(0)g'(0)\Big]\varphi'(0)
 =\Big\{\frac{\beta}{\alpha}+\frac{J'(0)}{J(0)}\Big\}J(0)g(0).
 \end{equation}
 Taking $g(z)=\alpha+\beta z$ in (\ref{eq17}), then
 $$\Big[J'(0)\alpha+J(0)\beta\Big]\varphi'(0)
 =\Big\{\frac{\beta}{\alpha}+\frac{J'(0)}{J(0)}\Big\}J(0)\alpha.$$
 Thus $\varphi'(0)=1$ since $J'(0)\alpha+J(0)\beta\neq0$. According to the Schwartz Lemma, we get $\varphi(z)=z$. The contradiction shows that $\varphi(0)\neq0$. And then taking $g(z)=z^n$ ($n\geq2$) in (\ref{eq16}), we get
 $$\Big[J'\big(\varphi(0)\big)\big(\varphi(0)\big)^n+J\big(\varphi(0)\big)n\big(\varphi(0)\big)^{n-1}
 \Big]\varphi'(0)
 =\Big\{\frac{\beta}{\alpha}+\frac{J'(0)}{J(0)}\Big\}J\big(\varphi(0)\big)\big(\varphi(0)\big)^n.$$
 Then
 \begin{equation}\label{eq18}
 \Big[\frac{1}{n}J'\big(\varphi(0)\big)\varphi(0)+J\big(\varphi(0)\big)\Big]\varphi'(0)
 =\frac{1}{n}\Big\{\frac{\beta}{\alpha}+\frac{J'(0)}{J(0)}\Big\}J\big(\varphi(0)\big)\varphi(0).
 \end{equation}
 Taking $n\rightarrow\infty$, we get $J(\varphi(0))\varphi'(0)=0$. Assume $J(\varphi(0))\neq0$, then $\varphi'(0)=0$. Taking $\varphi'(0)=0$ in (\ref{eq18}), we obtain that
 $$\frac{1}{n}\Big\{\frac{\beta}{\alpha}+\frac{J'(0)}{J(0)}\Big\}J\big(\varphi(0)\big)\varphi(0)=0.$$
 Then $J(\varphi(0))=0$. The contradiction shows that $J(\varphi(0))=0$. Taking $J(\varphi(0))=0$ in (\ref{eq18}), we get $J'(\varphi(0))\varphi'(0)=0$, that is $(J\circ\varphi)'(0)=0$. And $J$ is a inner function. Hence $J\circ\varphi\in z^2H^\infty$.

 Conversely, suppose $\varphi(\mathcal Z_J)\subseteq \mathcal Z_J \text{ and } J\circ\varphi\in z^2H^\infty$. Hence
 $$J(\varphi(0))=0 \text{ and } J'(\varphi(0))\varphi'(0)=0.$$
 Let $f=Jg\in JH^p_{\alpha,\beta}$ for $g\in H^p_{\alpha,\beta}$, then
 $$(f\circ\varphi)(0)=J\big(\varphi(0)\big)g\big(\varphi(0)\big)=0$$ and
 $$(f\circ\varphi)'(0)=J'\big(\varphi(0)\big)g\big(\varphi(0)\big)\varphi'(0)+
 J\big(\varphi(0)\big)g'\big(\varphi(0)\big)\varphi'(0)=0.$$
 Therefore,
 $$(f\circ\varphi)'(0)=\Big\{\frac{\beta}{\alpha}+\frac{J'(0)}{J(0)}\Big\}(f\circ\varphi)(0)=0,$$ that is,
 $$J(0)(f\circ\varphi)'(0)\alpha=J(0)(f\circ\varphi)(0)\beta+J'(0)(f\circ\varphi)(0)\alpha=0.$$
 By Lemma \ref{lem1}(2), we get $C_\varphi f\in JH^p_{\alpha,\beta}$. Hence $C_\varphi JH^p_{\alpha,\beta}\subseteq JH^p_{\alpha,\beta}$.
\end{proof}

In the next theorem, we will discuss the case that 0 belongs to the set $\mathcal Z_J$.

\begin{theorem}\label{the4}
 Let $\varphi:\mathbb D\longrightarrow\mathbb D$ be a holomorphic and different from the identity map, $(\alpha,\beta)$ be an admissible pair, and $J$ an inner function such that $0\in\mathcal Z_J$ with multiplicity n. If $\varphi(\mathcal Z_J)\subseteq \mathcal Z_J \text{ and } J\circ\varphi\in z^{n+2}H^\infty$, then $C_\varphi(JH^p_{\alpha,\beta})\subseteq JH^p_{\alpha,\beta}$.
\end{theorem}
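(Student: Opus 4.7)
The plan is to apply Lemma~\ref{lem1}(3) to $f\circ\varphi$ with $G=J$. Since $J$ has a zero of multiplicity $n$ at the origin, the hypothesis $J\circ\varphi\in z^{n+2}H^\infty$ forces $J\circ\varphi$ to vanish to order at least $n+2$ at $0$, which in turn will make $f\circ\varphi$ vanish to at least that order. The two derivative conditions in Lemma~\ref{lem1}(3) then become trivially satisfied, so no computation of the combinatorial constant is actually required.

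Take any $f\in JH^p_{\alpha,\beta}$ and write $f=Jg$ with $g\in H^p_{\alpha,\beta}$; boundedness of $C_\varphi$ on $H^p$ ensures $f\circ\varphi\in H^p$. Factor $f\circ\varphi=(J\circ\varphi)(g\circ\varphi)$ and apply the general Leibniz rule:
$$(f\circ\varphi)^{(m)}(0)=\sum_{j=0}^{m}\binom{m}{j}(J\circ\varphi)^{(j)}(0)(g\circ\varphi)^{(m-j)}(0).$$
Because $J\circ\varphi\in z^{n+2}H^\infty$, every derivative $(J\circ\varphi)^{(j)}(0)$ with $j\le n+1$ vanishes, so the whole sum is zero for each $m\in\{0,1,\ldots,n+1\}$. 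In particular,
$$(f\circ\varphi)(0)=(f\circ\varphi)'(0)=\cdots=(f\circ\varphi)^{(n+1)}(0)=0.$$

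Now I verify the two conditions of Lemma~\ref{lem1}(3) with $G=J$. The first, $(f\circ\varphi)^{(k)}(0)=0$ for $k=0,\ldots,n-1$, is immediate from the display above. The second,
$$(f\circ\varphi)^{(n+1)}(0)=\Big\{(n+1)\frac{\beta}{\alpha}+\frac{J^{(n+1)}(0)}{J^{(n)}(0)}\Big\}(f\circ\varphi)^{(n)}(0),$$
also holds trivially, since both the $n$th and $(n+1)$th derivatives at $0$ are zero, reducing the identity to $0=0$. Lemma~\ref{lem1}(3) therefore yields $f\circ\varphi\in JH^p_{\alpha,\beta}$, giving the desired inclusion $C_\varphi(JH^p_{\alpha,\beta})\subseteq JH^p_{\alpha,\beta}$.

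The zero-set hypothesis $\varphi(\mathcal Z_J)\subseteq\mathcal Z_J$ is the implicit ingredient that guarantees $f\circ\varphi$ genuinely has $J$ as an $H^p$-factor globally (and not merely that it matches the right Taylor conditions at the origin), which is the unstated assumption behind the application of Lemma~\ref{lem1}(3). I do not expect any substantive obstacle: the argument is a direct generalization of the converse direction of Theorem~\ref{the3}, with the Leibniz rule replacing the two-term product rule and the vanishing order $n$ replacing $0$.
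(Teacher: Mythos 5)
Your proposal is correct and follows essentially the same route as the paper: factor $f\circ\varphi=(J\circ\varphi)(g\circ\varphi)$, use the Leibniz rule together with $J\circ\varphi\in z^{n+2}H^\infty$ to get $(f\circ\varphi)^{(k)}(0)=0$ for $0\le k\le n+1$, and then invoke Lemma~\ref{lem1}(3) with both derivative conditions holding trivially. Your closing remark about the role of $\varphi(\mathcal Z_J)\subseteq\mathcal Z_J$ is a fair observation, but it does not change the argument, which matches the paper's.
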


\begin{proof}
 If $\varphi(\mathcal Z_J)\subseteq \mathcal Z_J \text{ and } J\circ\varphi\in z^{n+2}H^\infty$, then
 $$(J\circ\varphi)(0)=(J\circ\varphi)'(0)=\cdots=(J\circ\varphi)^{(n+1)}(0)=0.$$
 Let $f=Jg\in H^p_{\alpha,\beta}$ for $g\in H^p_{\alpha,\beta}$, then
 \begin{align*}
 (f\circ\varphi)^{(k)}=&\Big((J\circ\varphi)(g\circ\varphi)\Big)^{(k)}(0)\\
 =&\sum^k_{m=0}C^m_k(J\circ\varphi)^{(k-m)}(0)(g\circ\varphi)^{(m)}(0).
 \end{align*}
 Then $(f\circ\varphi)^{(k)}(0)=0$ for $0\leq k\leq n+1$. Therefore,
 $$(f\circ\varphi)^{(n+1)}(0)
 =\Big\{(n+1)\frac{\beta}{\alpha}+\frac{J^{(n+1)}(0)}{J^{(n)}(0)}\Big\}(f\circ\varphi)^{(n)}(0)=0.$$
 By Lemma \ref{lem1}(3), we get that $C_\varphi f=(f\circ\varphi)\in JH^p_{\alpha,\beta}$. Hence $C_\varphi (JH^p_{\alpha,\beta})\subseteq JH^p_{\alpha,\beta}$.
\end{proof}

By \cite{S.C.}, Schwartz states that a holomorphic self-map of $\mathbb D$ is an automorphism if and only if $C_\varphi$ is invertible operator that belongs to $H^2$. In the following theorem, we discuss the range will narrow to the subspace of $H^2$.

\begin{theorem}\label{the5}
Let $\varphi:\mathbb D\longrightarrow\mathbb D$ be a holomorphic function. Let $H^2_a$ denote the subspace of $H^2$ consisting of functions vanishing at $a\in\mathbb D$ and let $H^2_b$ denote the subspace of $H^2$ consisting of functions vanishing at $b\in\mathbb D$. Then $C_\varphi: H_a^2\rightarrow H^2_b$ is invertible if and only if $\varphi$ is an automorphism and $\varphi(b)=a$
\end{theorem}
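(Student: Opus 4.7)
The plan is to bootstrap from the classical result attributed to Schwartz (cited in the paper) that $C_\varphi:H^2\to H^2$ is invertible if and only if $\varphi$ is an automorphism of $\mathbb{D}$. The essential structural fact I will exploit is that $H^2_a$ is a codimension-one subspace of $H^2$ with the direct-sum decomposition $H^2=\mathbb{C}\cdot\mathbf{1}\oplus H^2_a$ via $f=f(a)\mathbf{1}+(f-f(a)\mathbf{1})$, and likewise $H^2=\mathbb{C}\cdot\mathbf{1}\oplus H^2_b$.

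For the forward direction, assume $C_\varphi:H^2_a\to H^2_b$ is invertible. First I would extract the condition $\varphi(b)=a$ directly from the fact that $C_\varphi$ sends $H^2_a$ into $H^2_b$: testing against $f(z)=z-a\in H^2_a$ gives $0=(C_\varphi f)(b)=\varphi(b)-a$. Next I would upgrade invertibility to all of $H^2$. Surjectivity on $H^2$: given $g\in H^2$, split $g=g(b)\mathbf{1}+(g-g(b)\mathbf{1})$ with $g-g(b)\mathbf{1}\in H^2_b$; by the hypothesized surjectivity there is $h\in H^2_a$ with $C_\varphi h=g-g(b)\mathbf{1}$, and since $C_\varphi\mathbf{1}=\mathbf{1}$ we get $C_\varphi(g(b)\mathbf{1}+h)=g$. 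Injectivity on $H^2$: if $\varphi$ were constant $\equiv c$, then $C_\varphi f=f(c)\mathbf{1}$ for all $f$, but $\mathbf{1}\notin H^2_b$ unless the value is zero, forcing $C_\varphi|_{H^2_a}\equiv 0$ and contradicting invertibility; so $\varphi$ is non-constant, $\varphi(\mathbb{D})$ is open, and the identity principle forces $C_\varphi$ to be injective on $H^2$. Invertibility on $H^2$ now yields that $\varphi$ is an automorphism by Schwartz's theorem.

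For the reverse direction, assume $\varphi\in\mathrm{Aut}(\mathbb{D})$ and $\varphi(b)=a$. Then $C_\varphi:H^2\to H^2$ is invertible with inverse $C_{\varphi^{-1}}$. I would verify that restriction works symmetrically: for $f\in H^2_a$, $(C_\varphi f)(b)=f(\varphi(b))=f(a)=0$, so $C_\varphi$ sends $H^2_a$ into $H^2_b$; for $g\in H^2_b$, $(C_{\varphi^{-1}}g)(a)=g(\varphi^{-1}(a))=g(b)=0$, so $C_{\varphi^{-1}}$ sends $H^2_b$ into $H^2_a$. Consequently $C_\varphi|_{H^2_a}$ and $C_{\varphi^{-1}}|_{H^2_b}$ are mutually inverse bounded operators.

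The only genuine subtlety is the lifting step in the forward direction: one must package the codimension-one obstruction carefully so that surjectivity of $C_\varphi$ on the smaller spaces implies surjectivity on $H^2$. The decomposition through evaluation at $b$ (for the target) and the fact that $C_\varphi\mathbf{1}=\mathbf{1}$ are what make this clean; any other splitting runs into the trouble that the natural complement of $H^2_a$ is not preserved by $C_\varphi$ in an obvious way. Boundedness of the inverse in both directions is automatic from the open mapping theorem applied between the Banach spaces $H^2_a$ and $H^2_b$.
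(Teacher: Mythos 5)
Your argument is correct, and it takes a genuinely different route from the paper's. You bootstrap the invertibility of $C_\varphi$ from the codimension-one subspaces up to all of $H^2$ — surjectivity via the splitting $g=g(b)\mathbf{1}+(g-g(b)\mathbf{1})$ together with the key observation $C_\varphi\mathbf{1}=\mathbf{1}$, injectivity via the identity principle once $\varphi$ is seen to be non-constant — and then you invoke the classical Schwartz characterization (which the paper quotes immediately before the theorem, so it is fair to use). The paper instead gives a longer, essentially self-contained argument that never leaves the subspaces: it proves injectivity of $\varphi$ by applying the invertible adjoint $C_\varphi^*$ to reproducing kernels $K_w^a$ of $H^2_a$, reduces to a symbol $\psi=\varphi\circ\varphi_1^{-1}$ fixing $a$ by composing with an explicit disk automorphism $\varphi_1=\varphi_a\circ\varphi_b$, and then proves surjectivity of $\psi$ by showing that $f\mapsto\langle f,(C_\psi^*)^{-1}K_w^a\rangle$ is a multiplicative linear functional on $H^2_a$, hence a point evaluation, so that every $w$ is hit by $\psi$. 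Your approach buys brevity and transparency at the cost of leaning on the full-space theorem as a black box; the paper's approach avoids that citation as the engine of the proof but pays for it with a delicate reproducing-kernel and Gelfand-type argument. Both directions of your proof are sound, including the constant-symbol exclusion and the open mapping theorem remark, so there is no gap to repair.
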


\begin{proof}
If $\varphi$ is an automorphism and $\varphi(b)=a$. Then $\varphi^{-1}(a)=b$. For $f\in H^2_a$, then
$$(C_\varphi f)(b)=f(\varphi(b))=f(a)=0,$$
and for $g\in H^2_b$, then
$$(C_{\varphi^{-1}} g)(a)=g(\varphi^{-1}(a))=g(b)=0.$$
Therefore $C_\varphi: H_a^2\rightarrow H^2_b$ and $C_{\varphi^{-1}}: H_b^2\rightarrow H^2_a$ are well-defined. Notice that for any $z\in\mathbb D$ and $f\in H^2_a$, we obtain that
$$f(z)=f\big(\varphi(\varphi^{-1}(z))\big)=(C_{\varphi^{-1}}C_\varphi f)(z)$$
Then $C_{\varphi^{-1}}C_\varphi=I_{H_a^2}$. In the same way, we get $C_\varphi C_{\varphi^{-1}}=I_{H_b^2}$. So $C_\varphi$ is invertible and the inverse of $C_\varphi$ is $C_{\varphi^{-1}}$.

Conversely, suppose $C_\varphi: H_a^2\rightarrow H^2_b$ is invertible. Let $f(z)=\frac{a-z}{1-\bar{a}z}\in H^2_a$. Then $(C_\varphi f)(z)=f(\varphi(z))\in H_b^2$, that is $f(\varphi(b))=0$. And hence $\varphi(b)=a$. Let $K_w^a$ is the reproducing kernel of $H^2_a$ at the point $w$ and let $K_w^b$ is the reproducing kernel of $H^2_b$ at the point $w$. Let $w_1$ and $w_2$ belong to $\mathbb D$ with $\varphi(w_1)=\varphi(w_2)$. It is not hard to see that
$$K_{\varphi(w_1)}^a=K_{\varphi(w_2)}^a,$$
then
$$
\begin{aligned}
\langle f,C_\varphi^{*}K_{w_1}^a\rangle&=\langle C_\varphi f,K_{w_1}^a\rangle\\
&=f(\varphi(w_1))\\
&=f(\varphi(w_2))\\
&=\langle C_\varphi f,K_{w_2}^a\rangle\\
&=\langle f,C_\varphi^{*}K_{w_2}^a\rangle
\end{aligned}
$$
for $f\in H^2_a$. Since $C_\varphi^{*}$ is invertible, we obtain that $K_{w_1}^a=K_{w_2}^a$, then $w_1=w_2$. So $\varphi$ is injective. Let $\varphi_a(z)=\frac{a-z}{1-\bar{a}z}$ and $\varphi_b(z)=\frac{b-z}{1-\bar{b}z}$. Suppose $\varphi_1=\varphi_a\circ\varphi_b$, then $\varphi_1$ is an automorphism of $\mathbb D$ with $\varphi_1(b)=a$ and $\varphi_1^{-1}=\varphi_b\circ\varphi_a$, which implies that $C^{-1}_{\varphi_1}=C_{\varphi^{-1}_1}$ and $C^{-1}_{\varphi_1}: H_b^2\rightarrow H^2_a$ are well-defined. Let $\psi=\varphi\circ\varphi_1^{-1}$, then
$\psi(a)=\varphi(\varphi_1^{-1}(a))=\varphi(b)=a$
and $C_\psi=C_{\varphi_1^{-1}}C_\varphi: H_a^2\rightarrow H^2_a$ is well-defined. Since $C_\varphi$ and $C_{\varphi_1^{-1}}$ are invertible, we obtain that $C_\psi$ is also invertible. For any $w\in\mathbb D$, notice that the reproducing kernel $K_w^a$ belongs to $H_a^2$. Hence there exist a function $g\in H_a^2$ such that $C_\psi^{*}g=K_w^a$ as $C_\psi^{*}g$ is invertible. Suppose $g(z)=\frac{\bar{\eta}(z-a)}{1-\bar{\eta}(z-a)}\in H_a^2$ for $\eta\in\mathbb D$. Let $f_1$, $f_2$ and $f_1f_2$ belong to $H_a^2$. Then there exist $F_1$, $F_2$ and $F$ belong to $H_a^2$ such that
$$C_\psi F_1=f_1,$$
$$C_\psi F_2=f_2,$$
$$C_\psi F=f_1f_2.$$
For $w\in\mathbb D$, we obtain that
$$
\begin{aligned}
(F_1F_2)(\psi(w))&=F_1(\psi(w))F_2(\psi(w))\\
&=(C_\psi F_1)(w)(C_\psi F_2)(w)\\
&=f_1(w)f_2(w)\\
&=(C_\psi F)(w)\\
&=F(\psi(w)).
\end{aligned}
$$
Thus $F_1F_2=F$. Notice that
$$
\begin{aligned}
\langle f_1f_2,g\rangle&=\langle C_\psi F_1F_2,g\rangle\\
&=\langle F_1F_2,C_\psi^{*}g\rangle\\
&=\langle F_1F_2,K_w^a\rangle\\
&=F_1(w)F_2(w)\\
&=\langle F_1,K_w^a\rangle \langle F_2,K_w^a\rangle\\
&=\langle F_1,C_\psi^{*}g\rangle\langle F_2,C_\psi^{*}g\rangle\\
&=\langle C_\psi F_1,g\rangle \langle C_\psi F_2,g\rangle\\
&=\langle f_1,g\rangle \langle f_2,g\rangle.
\end{aligned}
$$
Hence $f\rightarrow\langle f,g\rangle$ is multiplicative linear functional on $H_a^2$.
Thus we obtain that $g(z)=\frac{\bar{\eta}(z-a)}{1-\bar{\eta}(z-a)}=K_\eta^a(z)$ where $\eta\in\mathbb D$. So $C_\psi^{*}K^a_w=K_w^a$, that is $K^a_{\varphi(\eta)}=K^a_w$. Thus for any $w\in\mathbb D$, there exist a point $\eta\in\mathbb D$ such that $\psi(\eta)=w$. Therefore, $\psi$ is surjective. It is not hard to see that $\psi$ is an automorphism of $\mathbb D$. Notice that $\varphi=\psi\circ\varphi_1$, then $\varphi$ is an automorphism.
\end{proof}

\section{\bf Beurling type invariant subspace of composition operators on Hardy space $H^p$}
Throughout this section, let $\varphi$ be a holomorphic self-map function of $\mathbb D$ and let $\theta\in H^\infty(\mathbb D)$ be an inner function. The symbol $\mathcal Z(f)$ represents the zero set of holomorphic function $f$ and the symbol $mult_f(z)$ represents the multiplicity of $z$ that belongs to set $\mathcal Z(f)$.

Now we are ready to present a important lemma that is basic of this section.

\begin{lemma}\label{lem4}
(1)If $\theta H^p$ is invariant for $C_\varphi$, then $mult_\theta(\alpha)\leq mult_{\theta\circ\varphi}(\alpha)$ for all $\alpha\in \mathcal Z(\theta)$.\\
(2)The quotient $\displaystyle{\frac{\theta\circ\varphi}{\theta}}$ defines a holomorphic function on $\mathbb D$ if and only if $mult_\theta(\alpha)\leq mult_{\theta\circ\varphi}(\alpha)$ for all $\alpha\in \mathcal Z(\theta)$.
\end{lemma}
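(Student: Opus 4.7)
The plan is to prove the two parts separately. Part (1) reduces to applying the invariance hypothesis to the specific test element $\theta$ itself, while part (2) is a standard local analysis of removable singularities.

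For part (1), I would exploit the fact that $\theta=\theta\cdot 1$ lies in $\theta H^p$, since the constant function $1$ belongs to $H^p$. Applying the invariance hypothesis then forces $C_\varphi\theta=\theta\circ\varphi$ to lie in $\theta H^p$, so we can write $\theta\circ\varphi=\theta h$ for some $h\in H^p$. Fixing $\alpha\in\mathcal Z(\theta)$ with $n=mult_\theta(\alpha)$ and using the local factorization $\theta(z)=(z-\alpha)^n g(z)$ with $g(\alpha)\neq 0$, one gets $(\theta\circ\varphi)(z)=(z-\alpha)^n g(z)h(z)$ near $\alpha$. Since $h$ is holomorphic on $\mathbb D$, the multiplicity of $\theta\circ\varphi$ at $\alpha$ is $n+mult_h(\alpha)\geq n$, which is the required inequality.

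For part (2), set $F=(\theta\circ\varphi)/\theta$. Away from $\mathcal Z(\theta)$, $F$ is automatically holomorphic as a quotient of holomorphic functions with nonvanishing denominator, so the only question is what happens at each $\alpha\in\mathcal Z(\theta)$. At such a point, with $n=mult_\theta(\alpha)$ and $m=mult_{\theta\circ\varphi}(\alpha)$, the local factorizations $\theta(z)=(z-\alpha)^n u(z)$ and $(\theta\circ\varphi)(z)=(z-\alpha)^m v(z)$ with $u(\alpha),v(\alpha)\neq 0$ give $F(z)=(z-\alpha)^{m-n}v(z)/u(z)$ near $\alpha$. This has a removable singularity at $\alpha$ if and only if $m\geq n$. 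Taking the conjunction over all $\alpha\in\mathcal Z(\theta)$ yields the stated equivalence.

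There is no real obstacle in either part: both arguments are elementary. The two points that deserve care are checking that $\theta$ is a legitimate test element of $\theta H^p$ (which comes down to $1\in H^p$) and recognising that the multiplicity comparison at each zero is precisely the criterion for removability of the corresponding singularity of $F$.
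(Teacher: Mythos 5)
Your proposal is correct, and part (1) follows the same route as the paper: apply $C_\varphi$ to the test element $\theta = \theta\cdot 1 \in \theta H^p$ to get $\theta\circ\varphi = \theta h$ with $h\in H^p$. In fact your version is slightly more careful than the paper's, which passes from $\varphi(\mathcal Z(\theta))\subseteq\mathcal Z(\theta)$ to the multiplicity inequality without comment; your local factorization $(\theta\circ\varphi)(z)=(z-\alpha)^n g(z)h(z)$ with $h$ holomorphic makes the inequality $mult_{\theta\circ\varphi}(\alpha)=n+mult_h(\alpha)\geq n$ explicit. For part (2) the mechanisms differ mildly: the paper factors $\theta=B_1 f$ and $\theta\circ\varphi=B_1B_2 g$ globally via Riesz/Blaschke factorization and cancels $B_1$, whereas you argue locally at each zero via removable singularities, with the divergence $\lim_{z\to\alpha}F(z)=\infty$ handling the converse exactly as in the paper. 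The local argument is arguably cleaner since it avoids invoking the Blaschke factorization machinery and works verbatim for any two holomorphic functions, not just inner ones; the global factorization, on the other hand, is reused in the paper's proof of Theorem 6, so the paper gets some economy from setting it up here. Both are complete; the only point worth flagging in either treatment is the degenerate case where $\varphi$ is constant at a zero of $\theta$, so that $\theta\circ\varphi\equiv 0$ and the multiplicity is infinite, which makes the inequality vacuous rather than false.
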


\begin{proof}
For part 1, if $\theta H^p$ is invariant for $C_\varphi$, that is $C_\varphi(\theta H^p)\subseteq\theta H^p$, there exist a function $f\in H^p$ such that
$$C_\varphi(\theta)=\theta\circ\varphi=\theta f,$$
then $$\mathcal Z(\theta)\subseteq\mathcal Z(\theta\circ\varphi).$$
Hence for any a point $a\in\mathcal Z(\theta)\subseteq\mathcal Z(\theta\circ\varphi)$, then $\theta(\varphi(a))=0$, which is equivalent to $\varphi(a)\in\mathcal Z(\theta)$, Therefore $mult_\theta(\alpha)\leq mult_{\theta\circ\varphi}(\alpha)$.

For part 2, suppose $mult_\theta(\alpha)\leq mult_{\theta\circ\varphi}(\alpha)$. Let $\{a_n\}$ be the zero sequence of $\theta$($\{a_n\}$ may be empty or finite set). Let $B_1$ be the Blaschke product corresponding to the sequence $\{a_n\}$. Thus there exist a holomorphic function $f$ such that
$$\theta=B_1f \text { and } f(z)\neq0 \text{ for all } z\in\mathbb D.$$
If $\alpha$ is a zero of $\theta$, then
$$mult_\theta(\alpha)\leq mult_{\theta\circ\varphi}(\alpha).$$
Thus, there exist a Blaschke product $B_2$ and a holomorphic function $g$ such that
$$\theta\circ\varphi=B_1B_2g \text{ and } g(z)\neq0 \text{ for all } z\in\mathbb D.$$
Then $\displaystyle{\frac{\theta\circ\varphi}{\theta}}=\displaystyle{\frac{B_1B_2g}{B_1f}}=
\displaystyle{\frac{B_2g}{f}}$ is a holomorphic function clearly on $\mathbb D$.
Conversely, if $\displaystyle{\frac{\theta\circ\varphi}{\theta}}$ is a holomorphic function on $\mathbb D$. Suppose to the contrary that $mult_\theta(\alpha)>mult_{\theta\circ\varphi}(\alpha)$ for some $\alpha\in\mathcal Z(\theta)$, then
$$\lim_{z\to \alpha}\displaystyle{\frac{\theta\circ\varphi}{\theta}}=\infty.$$
Thus $\displaystyle{\frac{\theta\circ\varphi}{\theta}}$ is not holomorphic, a contradiction with conditions.
\end{proof}

The inverse of Lemma \ref{lem4}(1) is invalid, that is, $mult_\theta(\alpha)\leq mult_{\theta\circ\varphi}(\alpha)$ can not get $\theta H^p$ is an invariant for $C_\varphi$. And the Lemma \ref{lem4}(1) will be discussed in latter corollary.

The following theorem is the central result of this section.

\begin{theorem}\label{the6}
The following statement are equivalent:\\
(1) $\theta H^p$ is an invariant subspace for $C_\varphi$.\\
(2) $\displaystyle{\frac{\theta\circ\varphi}{\theta}}$ belongs to $\mathcal S(\mathbb D)$.
\end{theorem}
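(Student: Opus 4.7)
The plan is to prove the equivalence by handling each direction separately. The forward direction $(1)\Rightarrow(2)$ requires extracting a candidate multiplier from the invariance hypothesis and then controlling it in sup-norm; the reverse $(2)\Rightarrow(1)$ is essentially a direct computation using boundedness of $C_\varphi$ on $H^p$.

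For $(1)\Rightarrow(2)$, assuming $C_\varphi(\theta H^p)\subseteq\theta H^p$, I would apply $C_\varphi$ to $\theta\cdot 1\in\theta H^p$ to obtain $\theta\circ\varphi=\theta h$ for some $h\in H^p$; by Lemma~\ref{lem4} the quotient $h=(\theta\circ\varphi)/\theta$ is holomorphic on $\mathbb D$. To upgrade this to $h\in\mathcal S(\mathbb D)$ I would pass to nontangential boundary values: since $\theta$ is inner $|\theta^*(e^{it})|=1$ a.e., while $\varphi(\mathbb D)\subseteq\mathbb D$ and $\|\theta\|_\infty=1$ give $\|\theta\circ\varphi\|_\infty\le 1$, so $|h^*(e^{it})|=|(\theta\circ\varphi)^*(e^{it})|\le 1$ a.e. The last step deduces $\|h\|_\infty\le 1$ from $h\in H^p$ together with this boundary bound: I would invoke the inner-outer factorization $h=h_ih_o$, observe $|h_i|\le 1$ automatically, and for the outer factor use
$$\log|h_o(z)|=\frac{1}{2\pi}\int_0^{2\pi} P(z,t)\log|h_o^*(e^{it})|\,dt\le 0$$
(since $|h_o^*|=|h^*|\le 1$ a.e.), yielding $|h|\le 1$ on $\mathbb D$.

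For $(2)\Rightarrow(1)$, write $h=(\theta\circ\varphi)/\theta\in\mathcal S(\mathbb D)\subseteq H^\infty(\mathbb D)$ so that $\theta\circ\varphi=\theta h$. For $\theta f\in\theta H^p$ with $f\in H^p$ the identity
$$C_\varphi(\theta f)=(\theta\circ\varphi)(f\circ\varphi)=\theta\,h\,(f\circ\varphi)$$
together with boundedness of $C_\varphi$ on $H^p$ (so $f\circ\varphi\in H^p$) and the multiplier property $H^\infty\cdot H^p\subseteq H^p$ places $C_\varphi(\theta f)$ in $\theta H^p$.

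The main obstacle I expect is the implication ``$h\in H^p$ together with $|h^*|\le 1$ a.e.\ implies $h\in\mathcal S(\mathbb D)$'', because the theorem is stated for all $0<p<\infty$. For $p\ge 1$ the Poisson integral representation gives it immediately, but for $0<p<1$ the inner-outer factorization --- or the Riesz factorization already cited in the introduction --- is what carries the argument through, since one must first isolate a zero-free factor before taking logarithms of boundary moduli.
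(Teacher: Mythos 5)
Your proof is correct and follows essentially the same route as the paper: extract $h=(\theta\circ\varphi)/\theta\in H^p$, invoke Lemma~\ref{lem4} for holomorphy, bound the boundary modulus by $1$ a.e.\ using $|\theta^*|=1$ a.e.\ and $\|\theta\circ\varphi\|_\infty\le 1$, and upgrade to $\|h\|_\infty\le 1$ via canonical factorization, with the reverse direction being the same multiplier computation. The only difference is cosmetic --- the paper Riesz-factorizes $\theta$ and $\theta\circ\varphi$ separately and tracks Blaschke divisibility to reach the same boundary estimate, whereas you factor the quotient directly; you are in fact more explicit than the paper about the one delicate step (that an $H^p$ function with boundary modulus at most $1$ a.e.\ lies in $\mathcal S(\mathbb D)$, which requires the factorization when $0<p<1$).
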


\begin{proof}
$(1)\Rightarrow(2)$: Suppose $\theta H^p$ is invariant under the  composition operator $C_\varphi$. By the Lemma \ref{lem4}, we get that $\displaystyle{\frac{\theta\circ\varphi}{\theta}}$ is a holomorphic function on $\mathbb D$. Since $\theta\circ\varphi\in\theta H^p$, there exist a function $f\in H^p$ such that
$$\theta\circ\varphi=\theta f,$$
then
$$f=\displaystyle{\frac{\theta\circ\varphi}{\theta}}\in H^p.$$
Since $\theta$ is inner function, then $\theta\in H^\infty(\mathbb D)$. By Riesz factorization theorem, there exist a function $g_1\in H^\infty(\mathbb D)$ and a Blaschke product $B_1$ such that
$$g_1(z)\neq0 \text{ for all } z\in\mathbb D \text{ and } \theta=B_1g_1.$$
Again by Riesz factorization theorem, there exist a function $g_2\in H^\infty(\mathbb D)$ and a Blaschke product $B_2$ such that
$$g_2(z)\neq0 \text{ for all } z\in\mathbb D \text{ and } \theta\circ\varphi=B_2g_2.$$
Since $\mathcal Z(\theta)\subseteq\mathcal Z(\theta\circ\varphi)$, there exist a Blaschke product $B_3$ such that
$$B_2=B_1B_3.$$
Then
$$\theta\circ\varphi=B_1B_3g_2.$$
For $g_2\in H^\infty(\mathbb D)$ and $\parallel B_2\parallel_\infty=\parallel B_1B_3\parallel_\infty=1$, as $B_2$ is a inner function, which implies that
$$\parallel\theta\circ\varphi\parallel_\infty=\parallel B_1B_3g_2\parallel_\infty=\parallel g_2\parallel_\infty\leq 1.$$
And $$f=\frac{\theta\circ\varphi}{\theta}=\frac{B_1B_3g_2}{B_1g_1}=\frac{B_3g_2}{g_1}\in H^p.$$
Since $|g_1(e^{it})|=1$ a.e.,taking the radial limit, then
$$|f(e^{it})|=\Big|\frac{B_3(e^{it})g_2(e^{it})}{g_1(e^{it})}\Big|=|g_2(e^{it})| \text{ a.e.,}$$
and hence $f\in H^\infty(\mathbb D)$ and $\parallel f\parallel_\infty=\parallel g_2\parallel_\infty\leq1$. Therefore, $f\in\mathcal S(\mathbb D).$

$(2)\Rightarrow(1)$: Suppose $\displaystyle{\frac{\theta\circ\varphi}{\theta}}\in\mathcal S(\mathbb D)$, there exist a function $f\in\mathcal S(\mathbb D)$ such that
$$\theta\circ\varphi=\theta f.$$
Let $g\in H^p$, then
$$C_\varphi(\theta g)=(\theta\circ\varphi)(g\circ\varphi)=\theta f(g\circ\varphi).$$
And $C_\varphi$ is bounded, then
$$g\circ\varphi\in H^p.$$
Since $f\in H^\infty(\mathbb D)$, we get that $f(g\circ\varphi)\in H^p$. Hence $C_\varphi(\theta g)=\theta f(g\circ\varphi)\in \theta H^p$. This completes the proof of the theorem.
\end{proof}

\begin{corollary}\label{cor1}
Let $B$ be a Blaschke product and let $\varphi$ be a holomorphic self map of $\mathbb D$. Then the following statements are equivalent:\\
(1)$BH^p$ is an invariant subspace under $C_\varphi$.\\
(2)$mult_{B}(w)\leq mult_{B\circ\varphi}(w)$ for all $w$ in $\mathcal Z(B)$.
\end{corollary}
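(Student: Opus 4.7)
The plan is to deduce the corollary directly from Theorem \ref{the6} combined with Lemma \ref{lem4}, by observing that when $\theta = B$ is a Blaschke product, the Schur-class condition $(\theta\circ\varphi)/\theta\in\mathcal S(\mathbb D)$ reduces to the bare holomorphy condition on the quotient. The key point is that for Blaschke products, once the quotient is holomorphic, it is automatically bounded by $1$ in modulus.

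For $(1)\Rightarrow(2)$, I would simply invoke Lemma \ref{lem4}(1): if $BH^p$ is invariant for $C_\varphi$, then $mult_B(w)\leq mult_{B\circ\varphi}(w)$ for every $w\in\mathcal Z(B)$.

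For $(2)\Rightarrow(1)$, I would first apply Lemma \ref{lem4}(2) to conclude from the multiplicity hypothesis that $f:=(B\circ\varphi)/B$ defines a holomorphic function on $\mathbb D$. The task is then to show $\|f\|_\infty\leq 1$, so that $f\in\mathcal S(\mathbb D)$ and Theorem \ref{the6} yields invariance. Since $B$ is inner, $\|B\|_\infty\leq 1$, and because $\varphi$ is a self-map of $\mathbb D$, we have $B\circ\varphi\in H^\infty(\mathbb D)$ with $\|B\circ\varphi\|_\infty\leq 1$. Apply the Riesz factorization theorem to $B\circ\varphi$ to write
\[
B\circ\varphi = B_2 g,
\]
where $B_2$ is the Blaschke product formed from the zeros of $B\circ\varphi$ (with multiplicity) and $g\in H^\infty(\mathbb D)$ is zero-free with $\|g\|_\infty=\|B\circ\varphi\|_\infty\leq 1$. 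The multiplicity hypothesis says exactly that $B$ divides $B_2$ as a Blaschke product, so $B_2 = B B_3$ for some Blaschke product $B_3$. Then
\[
f = \frac{B\circ\varphi}{B} = \frac{B B_3 g}{B} = B_3 g,
\]
and since $|B_3|\leq 1$ and $|g|\leq 1$ on $\mathbb D$, we obtain $|f(z)|\leq 1$ for all $z\in\mathbb D$. Thus $f\in\mathcal S(\mathbb D)$, and Theorem \ref{the6} gives $C_\varphi(BH^p)\subseteq BH^p$.

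The only nonroutine step is the divisibility of Blaschke products: namely, that the multiplicity inequality $mult_B(w)\leq mult_{B\circ\varphi}(w)$ at every zero of $B$ forces $B_2 = BB_3$ with $B_3$ again a Blaschke product. This is the main obstacle, but it is standard — one matches the Blaschke factor at each common zero and observes that the remaining factor is a Blaschke product on the leftover zero sequence, whose Blaschke sum condition is inherited from that of $B_2$. Once this is in hand, the norm estimate $|f|\leq 1$ is immediate, and the remainder is a direct appeal to Theorem \ref{the6}.
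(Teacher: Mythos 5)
Your proposal is correct and follows essentially the same route as the paper: Lemma \ref{lem4}(1) for $(1)\Rightarrow(2)$, and for $(2)\Rightarrow(1)$ the Riesz factorization $B\circ\varphi=B_2g$ with $\|g\|_\infty=\|B\circ\varphi\|_\infty\le1$, the divisibility $B_2=BB_3$ extracted from the multiplicity hypothesis, and the bound $|(B\circ\varphi)/B|=|B_3g|\le1$ feeding into Theorem \ref{the6}. The paper's proof is the same argument up to relabeling of the Blaschke factors.
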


\begin{proof}
$(1)\Rightarrow(2)$: Suppose $BH^p$ is invariant under $C_\varphi$, that is,
$$C_\varphi(BH^p)\subseteq BH^p.$$
By Lemma \ref{lem4}(1), we get easily $mult_B(w)\leq mult_{B\circ\varphi}(w)$ for all $w\in\mathcal Z(B)$.\\
$(2)\Rightarrow(1)$: Suppose $mult_B(w)\leq mult_{B\circ\varphi}(w)$ for all $w$ in $\mathcal Z(B)$. By Riesz factorization theorem, there exist a Blaschke product $B_1$ and a function $g_1\in H^\infty(\mathbb D)$ such that
$$B\circ\varphi=B_1g_1 \text{ and } g_1(z)\neq0 \text{ for all } z\in\mathbb D,$$
since $B\circ\varphi\in H^\infty(\mathbb D)$. According to the condition that $mult_{B_1}(w)=mult_{B\circ\varphi}(w)\geq mult_{B}(w)$, then there exist a Blaschke product $B_2$ such that
$$B_1=BB_2.$$ Thus,
$$B\circ\varphi=BB_2g_1.$$
Since $g_1\in H^\infty(\mathbb D)$ and $\|BB_2\|_{\infty}=1$, as $B_1=BB_2$ is inner function. Then
$$\|g_1\|_\infty=\|BB_2g_1\|_{\infty}=\|B\circ\varphi\|_\infty\leq1.$$
Thus $g_1\in\mathcal S(\mathbb D)$, then
$$\displaystyle{\frac{B\circ\varphi}{B}}=\displaystyle{\frac{BB_2g_1}{B}}=B_2g_1\in\mathcal S(\mathbb D),$$
since $\|B_2g_1\|_\infty\leq\|B_2\|_\infty\|g_1\|_\infty\leq1$. By Theorem \ref{the6}, $BH^p$ is invariant under $C_\varphi$.
\end{proof}

Let $\varphi$ be a holomorphic self-map of $\mathbb D$, we will call $b\in\overline{\mathbb D  }$ a fixed point of $\varphi$ if
$$\lim_{r\rightarrow1^-}\varphi(rb)=b.$$
If $b\in\partial\mathbb D$ is a fixed point, then
$$\lim_{r\rightarrow1^-}\varphi'(rb)=\varphi'(b).$$
Let $\varphi$ be automorphism of $\mathbb D$ and $\varphi$ not be an identity, then $\varphi$ has at least two fixed points in complex plane. $\varphi$ is said to be:\\
(1) elliptic if $\varphi$ has a fixed point in $\mathbb D$.\\
(2) parabolic if $\varphi$ has a unique fixed point in $\partial\mathbb D$.\\
(3) hyperbolic if $\varphi$ has two fixed points in $\partial\mathbb D$.

The following theorem is another form of Theorem \ref{the6}.

\begin{theorem}\label{the7}
Let $\theta$ be an inner function and $\varphi$ be an elliptic automorphism of $\mathbb D$. Then the following statements are equivalent:\\
(1) $\theta H^p$ is an invariant subspace for $C_\varphi$.\\
(2) $\displaystyle{\frac{\theta\circ\varphi}{\theta}}$ is unimodular constant.\\
Moreover, in this case, if $w\in\mathbb D$ is the unique fixed point of $\varphi$, then
$$\displaystyle{\frac{\theta\circ\varphi}{\theta}}\equiv\left\{
\begin{aligned}
\big(&\varphi'(w)\big)^{mult_\theta(w)} &\text{ if } w\in\mathcal Z(\theta)\\
&1  &\text{ otherwise }
\end{aligned}
\right.$$
\end{theorem}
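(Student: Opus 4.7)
The plan is to combine Theorem \ref{the6} with the maximum modulus principle applied at the interior fixed point $w$. The implication $(2)\Rightarrow(1)$ is immediate, because any unimodular constant lies in $\mathcal S(\mathbb D)$, so Theorem \ref{the6} yields that $\theta H^p$ is invariant for $C_\varphi$.

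For $(1)\Rightarrow(2)$, I would first invoke Theorem \ref{the6} to put $f:=\frac{\theta\circ\varphi}{\theta}\in\mathcal S(\mathbb D)$, and then evaluate $f$ at the interior fixed point $w$ in order to force $f$ to be constant. If $w\notin\mathcal Z(\theta)$, then $\theta(w)\neq 0$, and the identity $\theta\circ\varphi=\theta f$ evaluated at $w$ gives $\theta(w)=\theta(w)f(w)$, hence $f(w)=1$. If instead $w\in\mathcal Z(\theta)$ with $n:=mult_\theta(w)\geq 1$, I write the local factorization $\theta(z)=(z-w)^n g(z)$ with $g(w)\neq 0$ and use $\varphi(z)-w=\varphi'(w)(z-w)+O((z-w)^2)$, where $\varphi'(w)\neq 0$ because $\varphi$ is an automorphism. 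Comparing the leading $(z-w)^n$ coefficients in the identity $(\varphi(z)-w)^n g(\varphi(z))=(z-w)^n g(z)f(z)$ then yields $f(w)=\bigl(\varphi'(w)\bigr)^n$.

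The key classical fact I would then use is that any automorphism of $\mathbb D$ fixing an interior point $w$ satisfies $|\varphi'(w)|=1$, obtained by conjugating $\varphi$ with the M\"obius transformation sending $w$ to $0$, which turns $\varphi$ into a disk rotation (or directly from the Schwarz Lemma applied to that conjugate). Consequently $|f(w)|=|\varphi'(w)|^n=1$ in both cases (with the convention $n=0$ when $w\notin\mathcal Z(\theta)$). Since $f\in\mathcal S(\mathbb D)$ attains modulus $1$ at the interior point $w$, the maximum modulus principle forces $f\equiv f(w)$, which is exactly the displayed unimodular constant. The main obstacle is only the local multiplicity bookkeeping when $w\in\mathcal Z(\theta)$; otherwise the proof reduces to applications of Theorem \ref{the6}, the Schwarz Lemma, and the maximum modulus principle.
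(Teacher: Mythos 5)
Your proposal is correct and follows essentially the same route as the paper: both reduce $(1)\Rightarrow(2)$ to Theorem \ref{the6}, evaluate $f=\frac{\theta\circ\varphi}{\theta}$ at the interior fixed point $w$ (the paper factors out the Blaschke factor $\bigl(\frac{w-z}{1-\bar wz}\bigr)^{mult_\theta(w)}$ where you factor out $(z-w)^n$, but the resulting limit computation giving $f(w)=(\varphi'(w))^{mult_\theta(w)}$ is the same), use $|\varphi'(w)|=1$ for an elliptic automorphism, and conclude by the maximum modulus principle. The direction $(2)\Rightarrow(1)$ is handled identically in both.
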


\begin{proof}
$(1)\Rightarrow(2)$: Suppose $\theta H^p$ is an invariant subspace for $C_\varphi$, we get $\displaystyle{\frac{\theta\circ\varphi}{\theta}}\in\mathcal S(\mathbb D)$, there exist a function $f\in\mathcal S(\mathbb D)$ such that $f=\displaystyle{\frac{\theta\circ\varphi}{\theta}}$. Suppose $w\in\mathbb D$ is the unique fixed point of $\varphi$. Let
$$g_w(z)=\frac{w-z}{1-\bar{w}z}$$
for all $z\in\mathbb D$. In the first case, if $w\in\mathcal Z(\theta)$, then there exist an inner function $\theta_1$ such that
$$\theta(z)=\big(g_w(z)\big)^{mult_\theta(w)}\theta_1(z) \text{ and } \theta_1(w)\neq0$$
for $z\in\mathbb D$. Then we get
$$
\begin{aligned}
f&=\displaystyle{\frac{\theta\circ\varphi}{\theta}}\\
&=\displaystyle{\frac{\big(g_w\circ\varphi\big)^{mult_\theta(w)}(\theta_1\circ\varphi)}
{\big(g_w\big)^{mult_\theta(w)}\theta_1}}\\
&=\Big(\displaystyle{\frac{g_w\circ\varphi}{g_w}}\Big)^{mult_\theta(w)}
\big(\frac{\theta_1\circ\varphi}{\theta_1}\big).
\end{aligned}
$$
Then we get
$$
\begin{aligned}
\lim_{z\rightarrow w}\frac{(g_w\circ\varphi)(z)}{g_w(z)}&=\lim_{z\rightarrow w}
\frac{w-\varphi(z)}{1-\bar{w}\varphi(z)}\cdot\frac{1-\bar{w}z}{w-z}\\
&=\lim_{z\rightarrow w}\frac{w-\varphi(z)-|w|^2z+\bar{w}z\varphi(z)}
{w-z-| w|^2\varphi(z)+\bar{w}z\varphi(z)}\\
&=\lim_{z\rightarrow w}\frac{\big(-\varphi'(z)\big)-|w|^2+\bar{w}\varphi(z)+\bar{w}z\varphi'(z)}
{(-1)-|w|^2\varphi'(z)+\bar{w}\varphi(z)+\bar{w}z\varphi'(z)}\\
&=\frac{\varphi'(w)(|w|^2-1)}{|w|^2-1}\\
&=\varphi'(w).
\end{aligned}
$$
Since $w$ is the unique fixed point of $\varphi$, that is $\varphi(w)=w$, which implies that
$$
\begin{aligned}
f(w)&=\Big(\displaystyle{\frac{g_w\circ\varphi(w)}{g_w(w)}}\Big)^{mult_\theta(w)}
\big(\frac{\theta_1\circ\varphi(w)}{\theta_1(w)}\big)\\
&=\big(\varphi'(w)\big)^{mult_\theta(w)}\frac{\theta_1(w)}{\theta_1(w)}\\
&=\varphi'(w)^{mult_\theta(w)}.
\end{aligned}
$$
We get $|\varphi'(w)|=1$ since $\varphi$ is an elliptic automorphism, and hence $|f(w)|=1$. Since $f\in\mathcal S(\mathbb D)$, by the maximum modulus principle, we get $f$ is constant function and $f\equiv f(w)\equiv\varphi'(w)^{mult_\theta(w)}$. The other case, if $w\notin\mathcal Z(\theta)$, that is, $\theta(w)\neq0$, then
$$f(w)=\displaystyle{\frac{\theta\circ\varphi(w)}{\theta(w)}}=\frac{\theta(w)}{\theta(w)}=1.$$
Again by the maximum modulus principle, we get $f$ is constant function and $f\equiv f(w)\equiv1$.

$(2)\Rightarrow(1)$: $\displaystyle{\frac{\theta\circ\varphi}{\theta}}$ is unimodular constant, then $\displaystyle{\frac{\theta\circ\varphi}{\theta}}\in\mathcal S(\mathbb D)$. By Theorem 6, we directly get $\theta H^p$ is an invariant subspace for $C_\varphi$.
\end{proof}

We repeat the proof to get the next two results.

\begin{corollary}\label{cor2}
Let $\varphi$ be a holomorphic self map of $\mathbb D$ and let $w\in\mathbb D$ be the fixed point of $\varphi$. Let $\theta$ be an inner function and suppose that $\theta(w)\neq0$. Then $\theta H^p$ is an invariant subspace for $C_\varphi$ if and only if $\theta\circ\varphi=\theta$.
\end{corollary}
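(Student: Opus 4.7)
The plan is to deduce this directly from Theorem \ref{the6} together with the maximum modulus principle, mimicking the ``otherwise'' case already handled inside the proof of Theorem \ref{the7}.

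First I would dispatch the easy direction: if $\theta\circ\varphi=\theta$, then $\frac{\theta\circ\varphi}{\theta}\equiv 1$, which certainly lies in $\mathcal S(\mathbb D)$, so Theorem \ref{the6} immediately gives that $\theta H^p$ is invariant for $C_\varphi$.

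For the forward direction, assume $\theta H^p$ is invariant for $C_\varphi$. By Theorem \ref{the6}, the function
$$f:=\frac{\theta\circ\varphi}{\theta}$$
is well-defined and holomorphic on $\mathbb D$, belongs to $H^\infty(\mathbb D)$, and satisfies $\|f\|_\infty\le 1$. I would then evaluate $f$ at the interior fixed point $w$: since $\varphi(w)=w$ and, crucially, $\theta(w)\neq 0$, we may plug in to obtain
$$f(w)=\frac{\theta(\varphi(w))}{\theta(w)}=\frac{\theta(w)}{\theta(w)}=1.$$
Thus $f\in H^\infty(\mathbb D)$ attains modulus $1$ at the interior point $w$, and the maximum modulus principle forces $f$ to be constant, equal to $f(w)=1$. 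Therefore $\theta\circ\varphi=\theta$, completing the proof.

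There is no real obstacle here — the hypothesis $\theta(w)\neq 0$ is exactly what allows us to evaluate $f$ at $w$ without worrying about zeros of the denominator (the difficulty that made the Blaschke factor argument necessary in Theorem \ref{the7}), and Theorem \ref{the6} already supplies the Schur-class membership needed to invoke the maximum modulus principle. The only thing to be slightly careful about is noting that ``fixed point in $\mathbb D$'' here means a genuine interior fixed point $\varphi(w)=w$, so the radial-limit definition given just before Theorem \ref{the7} reduces to the ordinary equation at $w$.
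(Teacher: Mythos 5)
Your proposal is correct and is essentially the paper's own argument: the paper gives no separate proof of this corollary, stating only that one ``repeats the proof'' of Theorem \ref{the7}, whose $w\notin\mathcal Z(\theta)$ case is exactly your computation $f(w)=\theta(\varphi(w))/\theta(w)=1$ followed by the maximum modulus principle, with the converse supplied by Theorem \ref{the6}.
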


\begin{theorem}\label{the8}
Let $\varphi$ be a parabolic automorphism of $\mathbb D$, every orbit of $\varphi$ is Blaschke summable. Then $B_zH^p\in LatC_\varphi$ for all $z\in\mathbb D$, where $B_z$ is the Blaschke product corresponding to the orbit $\{\varphi_m(z)\}_{m\geq0}$.
\end{theorem}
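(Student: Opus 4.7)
The plan is to reduce the claim to Corollary \ref{cor1}. Since $B_z$ is by construction a Blaschke product, Corollary \ref{cor1} says that $B_zH^p$ lies in $\operatorname{Lat}C_\varphi$ if and only if $\operatorname{mult}_{B_z}(w) \leq \operatorname{mult}_{B_z\circ\varphi}(w)$ for every $w\in\mathcal Z(B_z)$. Thus it is enough to verify this multiplicity inequality at every point of the orbit.

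First I would identify the zero set of $B_z$ as precisely the orbit $\{\varphi_m(z)\}_{m\geq 0}$, which is a Blaschke sequence by hypothesis. The key subsidiary observation is that this sequence consists of \emph{distinct} points: a parabolic automorphism $\varphi$ has its only fixed point on $\partial\mathbb D$, and since every nonzero iterate $\varphi_n$ is again a parabolic (or identity-free) M\"obius transformation sharing the same boundary fixed point, none of them fixes any point of $\mathbb D$; so $\varphi_m(z)=\varphi_n(z)$ with $m<n$ would force $\varphi_{n-m}(z)=z$, a contradiction. Consequently, each zero $w=\varphi_m(z)$ of $B_z$ is simple, i.e.\ $\operatorname{mult}_{B_z}(w)=1$.

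Next I would check the multiplicity of $B_z\circ\varphi$ at the same point $w=\varphi_m(z)$. Since $\varphi(w)=\varphi_{m+1}(z)$ is also in the orbit, we have $B_z(\varphi(w))=0$, so $w\in\mathcal Z(B_z\circ\varphi)$. By the chain rule,
\[
(B_z\circ\varphi)'(w)=B_z'(\varphi(w))\,\varphi'(w),
\]
and both factors are nonzero: $\varphi'(w)\neq 0$ because $\varphi$ is an automorphism of $\mathbb D$, while $B_z'(\varphi(w))\neq 0$ because $\varphi_{m+1}(z)$ is a simple zero of $B_z$ by the previous step. Hence $\operatorname{mult}_{B_z\circ\varphi}(w)=1$, so the inequality $\operatorname{mult}_{B_z}(w)\leq \operatorname{mult}_{B_z\circ\varphi}(w)$ holds (with equality). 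Applying Corollary \ref{cor1} gives $B_zH^p\in\operatorname{Lat}C_\varphi$.

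The only subtlety I foresee is the step ensuring that the orbit points are pairwise distinct, since without this $B_z$ could have a multiple zero at some $\varphi_m(z)$ while $B_z\circ\varphi$ inherits only a simple factor from the single iterate $\varphi_{m+1}(z)$, which would break the inequality. Apart from that structural point (which follows cleanly from the fixed-point theory of parabolic automorphisms), the argument is a direct application of Corollary \ref{cor1}, with no appeal to the more delicate Theorem \ref{the6} machinery needed.
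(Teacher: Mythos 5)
Your proposal is correct and follows essentially the same route as the paper: both reduce the statement to Corollary \ref{cor1} by observing that $\varphi$ maps the orbit $\{\varphi_m(z)\}_{m\geq0}=\mathcal Z(B_z)$ into itself. In fact your version is the more complete one, since the paper simply invokes Corollary \ref{cor1} without verifying the multiplicity condition, whereas you supply the needed justification that the orbit points of a parabolic automorphism are pairwise distinct (no iterate fixes a point of $\mathbb D$), so every zero of $B_z$ is simple and the inequality $\operatorname{mult}_{B_z}(w)\leq \operatorname{mult}_{B_z\circ\varphi}(w)$ follows from $B_z(\varphi(w))=0$ alone.
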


\begin{proof}
For any $a\in\mathbb D$, $B_z$ is the Blaschke product corresponding to the orbit $\{\varphi_m(a)\}_{m\geq0}$, then $\{\varphi_m(a)\}_{m\geq0}\subseteq\mathcal Z(B_a)$. Since every orbit of $\varphi$ is Blaschke summable, and by Corollary 1, there is clearly that $B_aH^p$ is an invariant subspace for $C_\varphi$, then $B_aH^p\in LatC_\varphi$. Therefore $B_zH^p\in LatC_\varphi$ for all $z\in\mathbb D$.
\end{proof}

\section{\bf Deddens algebra $\mathcal D_{C_\varphi}$ associated to compact composition operators on $H^p$ }
In this section, let $C_\varphi: H^p\rightarrow H^p$ be a bounded linear operator and let $C_\varphi$ be a compact composition operator.

Now we will determine that some types of operators belong to $\mathcal D_{C_\varphi}$ where. In the following two theorems, we will show that composition operator $C_\varphi$, multiplication operator $M_h$ and antiderivative operator $V$ belong to Deddens algebra $\mathcal D_{C_\varphi}$.

\begin{lemma}\label{lem5}
Let $\varphi:\mathbb D\rightarrow\mathbb D$ be an analytic function that satisfies $\varphi(0)=0$. Then the composition operator $C_\varphi$ belongs to the Deddens algebra $\mathcal D_{C_\varphi}$.
\end{lemma}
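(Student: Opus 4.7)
The plan is to unpack the definition of the Deddens algebra $\mathcal{D}_{C_\varphi}$ with $T=A=C_\varphi$ and show that the required inequality reduces to a one-step norm estimate on $C_\varphi$ that is furnished directly by the bound cited in the introduction.

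First I would observe that the condition $C_\varphi\in\mathcal{D}_{C_\varphi}$ asks for a constant $M>0$ such that
\[
\|C_\varphi^n\,C_\varphi f\|_{H^p}\le M\,\|C_\varphi^n f\|_{H^p}\qquad\text{for all }n\in\mathbb{N},\ f\in H^p.
\]
Since composition operators compose in the obvious way, $C_\varphi^n C_\varphi f = C_\varphi^{n+1} f = C_\varphi(C_\varphi^n f)$, so the inequality is equivalent to
\[
\|C_\varphi(C_\varphi^n f)\|_{H^p}\le M\,\|C_\varphi^n f\|_{H^p}.
\]
Thus it suffices to show that the operator norm of $C_\varphi$ on $H^p$ is bounded by some constant $M$, because then the estimate applies to every vector $C_\varphi^n f\in H^p$.

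Next I would invoke the norm bound recalled in the introduction, namely
\[
\|C_\varphi\|^p\le\frac{1+\varphi(0)}{1-\varphi(0)}.
\]
Under the hypothesis $\varphi(0)=0$ the right-hand side equals $1$, so $\|C_\varphi\|\le 1$ on $H^p$. Applying this to the vector $C_\varphi^n f$ gives
\[
\|C_\varphi^{n+1}f\|_{H^p}=\|C_\varphi(C_\varphi^n f)\|_{H^p}\le \|C_\varphi\|\cdot\|C_\varphi^n f\|_{H^p}\le \|C_\varphi^n f\|_{H^p},
\]
so the Deddens condition is satisfied with $M=1$, concluding that $C_\varphi\in\mathcal{D}_{C_\varphi}$.

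There is essentially no obstacle here: the only substantive input is the norm bound for $C_\varphi$ on $H^p$, which is cited at the start of the paper and becomes a contraction precisely when $\varphi$ fixes the origin. The rest is the semigroup identity $C_\varphi\circ C_\varphi^n=C_\varphi^{n+1}$ together with submultiplicativity of the operator norm. The proof therefore consists of these two lines, and the same template will presumably drive the sharper results for $M_h$ and $V$ that the authors announce immediately afterwards.
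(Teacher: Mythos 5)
Your proposal is correct and follows essentially the same route as the paper: write $C_\varphi^n C_\varphi f = C_\varphi(C_\varphi^n f)$ and bound by the operator norm of $C_\varphi$. The only cosmetic difference is that you pin down the constant as $M=1$ via the subordination bound with $\varphi(0)=0$, whereas the paper simply takes $M$ to be any bound on $\|C_\varphi\|$.
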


\begin{proof}
Notice that $C_\varphi$ is a bounded operator, then there exist a constant $M>0$ such that $\parallel C_\varphi\parallel\leq M$. For $f\in H^p$, we obtain that
$$
\begin{aligned}
\parallel C^n_\varphi C_\varphi f\parallel_{H^p}&=\parallel C_\varphi C^n_\varphi f\parallel_{H^p}\\
&\leq \parallel C_\varphi\parallel \parallel C^n_\varphi f\parallel_{H^p}\\
&\leq M\parallel C^n_\varphi f\parallel_{H^p}.
\end{aligned}
$$
Therefore composition operator $C_\varphi$ belongs to $\mathcal D_{C_\varphi}$.
\end{proof}

\begin{theorem}\label{the9}
Let $\varphi:\mathbb D\rightarrow\mathbb D$ be an analytic function that satisfies $\varphi(0)=0$ and let $h\in H^\infty(\mathbb D)$. Then the operators $M_h$ and $V$ belongs to the Deddens algebra $\mathcal D_{C_\varphi}$.
\end{theorem}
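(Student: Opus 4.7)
The plan splits into the easy multiplication case and the more delicate antiderivative case. For $M_h$, the key observation is that since $h\in H^\infty(\mathbb D)$ and each $\varphi_n$ is a self-map of $\mathbb D$, one has the pointwise bound $|h(\varphi_n(z))|\le\|h\|_\infty$ on $\mathbb D$. Writing out the norm and pulling this constant out of the sup-integral defining $\|\cdot\|_{H^p}$ gives
\[
\|C_\varphi^n M_h f\|_{H^p}^p=\sup_{0<r<1}\frac{1}{2\pi}\int_0^{2\pi}|h(\varphi_n(re^{i\theta}))|^p|f(\varphi_n(re^{i\theta}))|^p\,d\theta\le \|h\|_\infty^p\,\|C_\varphi^n f\|_{H^p}^p,
\]
so choosing any positive integer $M\ge\|h\|_\infty$ yields $M_h\in\mathcal D_{C_\varphi}$. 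Note that this half of the argument uses neither compactness of $C_\varphi$ nor $\varphi(0)=0$.

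For $V$, the first step is to produce the algebraic identity
\[
(Vf)\circ\varphi_n=V\bigl[(C_\varphi^n f)\cdot\varphi_n'\bigr].
\]
The cleanest derivation uses the chain rule: both sides are analytic on $\mathbb D$, they vanish at $z=0$ (because $\varphi_n(0)=0$ and $V$ kills the evaluation at $0$), and their derivatives in $z$ both equal $f(\varphi_n(z))\varphi_n'(z)$, so the two functions must agree. Alternatively, one substitutes $u=\varphi_n(w)$ in the line integral $\int_0^z f(\varphi_n(w))\varphi_n'(w)\,dw$ and watches it collapse to $\int_0^{\varphi_n(z)}f(u)\,du=(Vf)(\varphi_n(z))$.

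With this identity in hand, I would combine it with the standard fact that $V\colon H^p\to H^p$ is bounded (with some norm $C_p$, a consequence of Hardy's inequality on the Taylor coefficients) to reduce the task to the multiplicative estimate $\|(C_\varphi^n f)\,\varphi_n'\|_{H^p}\le K\,\|C_\varphi^n f\|_{H^p}$ uniformly in $n$ and $f$. The natural route is through the $H^\infty$-multiplier inequality $\|(C_\varphi^n f)\,\varphi_n'\|_{H^p}\le\|\varphi_n'\|_\infty\,\|C_\varphi^n f\|_{H^p}$, so the whole thing reduces to showing $\sup_n\|\varphi_n'\|_\infty<\infty$. Taking $M$ to be any positive integer exceeding $C_p\sup_n\|\varphi_n'\|_\infty$ then places $V$ in $\mathcal D_{C_\varphi}$.

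I expect the main obstacle to be exactly this uniform bound on $\|\varphi_n'\|_\infty$: raw Schwarz--Pick only gives $|\varphi_n'(z)|\le(1-|z|^2)^{-1}$, which blows up near the boundary. To close it, I would use both standing hypotheses. Compactness of $C_\varphi$ together with the interior fixed point $\varphi(0)=0$ forces $|\varphi'(0)|<1$ and, via Denjoy--Wolff iteration, $\varphi_n\to 0$ uniformly on compacts, which already controls $\varphi_n'(0)=\varphi'(0)^n\to 0$. In the regime $\|\varphi\|_\infty<1$, which is the hypothesis adopted in the section's main theorem and the operative case here, one additionally has $\overline{\varphi(\mathbb D)}\subset\mathbb D$, so a Cauchy-estimate argument of the form $|\varphi'(w)|\le\|\varphi\|_\infty/(r-|w|)$ combined with iterating $\varphi_n=\varphi\circ\varphi_{n-1}$ yields the desired $n$-uniform bound on $\|\varphi_n'\|_\infty$. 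The rest is then just threading these estimates through the identity above.
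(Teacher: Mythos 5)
Your treatment of $M_h$ is correct and is exactly the paper's argument: the intertwining $C_\varphi^n M_h=M_{h\circ\varphi_n}C_\varphi^n$ together with $\|h\circ\varphi_n\|_\infty\le\|h\|_\infty$. Your identity for $V$, namely $C_\varphi^nVf=VM_{\varphi_n'}C_\varphi^nf$, is also the one the paper derives.

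The gap is in how you estimate $VM_{\varphi_n'}$. Your plan factors the bound as $\|V\|\cdot\|\varphi_n'\|_\infty\cdot\|C_\varphi^nf\|_{H^p}$ and so hinges on $\sup_n\|\varphi_n'\|_\infty<\infty$. That quantity is in general infinite, even for a single $n=1$ and even under the extra hypothesis $\|\varphi\|_\infty<1$ that you import: a bounded analytic function need not have a bounded derivative. For instance, with the lacunary series $f(z)=\sum_k 2^{-k}z^{4^k}$ one has $\|f\|_\infty\le 1$ but $f'\notin H^\infty$ (since $\sum_k 2^{-k}\cdot 4^k=\infty$), so $\varphi=f/2$ satisfies $\varphi(0)=0$, $\|\varphi\|_\infty\le 1/2$, $C_\varphi$ compact, yet $\|\varphi'\|_\infty=\infty$. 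Your Cauchy-estimate repair controls $\varphi'$ only on the relatively compact set $\varphi(\mathbb D)$, i.e.\ the inner factors of the chain rule product $\varphi_n'(z)=\prod_{j=0}^{n-1}\varphi'(\varphi_j(z))$; the outermost factor $\varphi'(z)$ is evaluated at arbitrary $z\in\mathbb D$ and is exactly the one that blows up. (You also quietly assume $\|\varphi\|_\infty<1$, which is not a hypothesis of this theorem --- only $\varphi(0)=0$ and compactness of $C_\varphi$ are.)

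The paper avoids this by never separating $V$ from the multiplier: it bounds the combined Volterra-type operator $f\mapsto\int_0^z\varphi_n'(w)f(w)\,dw$ directly, invoking the Pommerenke/Aleman--Cima theorem that this operator is bounded on $H^p$ with norm controlled by the $BMOA$ norm of $\varphi_n$, and $\|\varphi_n\|_{BMOA}$ is uniformly bounded because $\|\varphi_n\|_\infty\le1$ for every $n$. Replacing your multiplier step by this citation closes the argument; as written, the $V$ half of your proposal does not go through.
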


\begin{proof}
It is clear that $C_\varphi M_h=M_{h\circ\varphi}C_\varphi$, and then
$$C^2_\varphi M_h=C_\varphi M_{h\circ\varphi}C_\varphi=M_{h\circ\varphi_2}C^2_\varphi,$$
$$C^3_\varphi M_h=C_\varphi M_{h\circ\varphi_2}C^2_\varphi=M_{h\circ\varphi_3}C^3_\varphi.$$
Hence
$$C^n_\varphi M_h=M_{h\circ\varphi_n}C^n_\varphi.$$
Notice that $\parallel M_{h\circ\varphi_n}\parallel=\parallel h\circ\varphi_n\parallel_\infty$ and $\varphi(\mathbb D)\subset\mathbb D$, then we obtain that
$$\parallel h\circ\varphi_n\parallel_\infty\leq\parallel h\parallel_\infty.$$
Let $M_1=\parallel h\parallel_\infty$, it follows that
$$
\begin{aligned}
\parallel C^n_\varphi M_h f\parallel_{H^p}&=\parallel M_{h\circ\varphi_n}C^n_\varphi f\parallel_{H^p}\\
&\leq \parallel M_{h\circ\varphi_n}\parallel \parallel C^n_\varphi f\parallel_{H^p}\\
&\leq \parallel h\parallel_\infty \parallel C^n_\varphi f\parallel_{H^p}\\
&= M_1\parallel C^n_\varphi f\parallel_{H^p}
\end{aligned}
$$
for $f\in H^p$. Therefore, multiplication operator $M_h$ belongs to $\mathcal D_{C_\varphi}$.

Let $f\in H^p$, it is not hard to see that $C_\varphi Vf=VM_{\varphi'}C_\varphi f$, and then
$$
\begin{aligned}
(C_\varphi^2Vf)(z)&=(C_\varphi VM_{\varphi'}C_\varphi f)(z)\\
&=C_\varphi\int_0^z(f\circ\varphi)(w) d(\varphi(w))\\
&=\int_0^z(f\circ\varphi\circ\varphi)(w) d((\varphi\circ\varphi)(w))\\
&=\int_0^z(f\circ\varphi_2)(w)\varphi'_2(w) dw\\
&=(VM_{\varphi'_2}C_\varphi^2f)(z),
\end{aligned}
$$
so that $C_\varphi^nVf=VM_{\varphi'_n}C_\varphi^nf$. By the definition of $\mathcal D_{C_\varphi}$, we get that antiderivative operator $V$ belongs to $\mathcal D_{C_\varphi}$ if and only if there exist a constant $M_2$ such that $\parallel VM_{\varphi'_n}\parallel\leq M_2$. And hence we just have to prove $\parallel VM_{\varphi'_n}\parallel\leq M_2$. It is clear that $\parallel\varphi\parallel_\infty\leq1$ and $\varphi_n$ is a self-map of $\mathbb D$ for $n\in\mathbb N$. For $n\in\mathbb N$, we obtain that $\parallel\varphi_n\parallel_{BMO}\leq1$ where $\parallel\cdot\parallel_{BMO}$ denotes BMO norm \cite{Z}. By a classic result of Pommerenke \cite{AJ,P.D}, we get that
$$\parallel\int_0^z\varphi'_n(w)f(w)dw\parallel_{H^p}\leq M\parallel f\parallel_{H^p}$$
for $M>0$, $f\in H^p$ and $n\in\mathbb N$. Let $f=1\in H^p$, then
$$\parallel VM_{\varphi'_n}1\parallel_{H^p}=\parallel\int_0^z\varphi'_n(w)dw\parallel_{H^p}\leq M.$$
Therefore, antiderivative operator $V$ belongs to $\mathcal D_{C_\varphi}$.
\end{proof}

\begin{lemma}\label{lem6}
If $u$ and $v$ are function that are differentiable $n+1$, then
$$\int u(w)v^{(n)}(w)dw=\sum^n_{j=1}u^{(j-1)}(w)v^{(n-j)}(w)(-1)^{j+1}+(-1)^n\int u^{(n)}(w)v(w)dw.$$
\end{lemma}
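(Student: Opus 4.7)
The statement is the general iterated integration-by-parts formula, so my plan is a straightforward induction on $n$. I will not try anything clever; the whole content is bookkeeping with signs and shifted indices.

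For the base case $n=1$, the claim reduces to the classical integration by parts
\[
\int u(w)v'(w)\,dw = u(w)v(w) - \int u'(w)v(w)\,dw,
\]
which matches the right-hand side of the stated formula when $n=1$: the sum contains only the $j=1$ term $u^{(0)}v^{(0)}(-1)^{2}=uv$, and the remaining integral has sign $(-1)^{1}$.

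For the inductive step I will assume the identity holds at level $n$ (for every sufficiently differentiable pair of functions) and deduce it at level $n+1$. Starting from $\int u\,v^{(n+1)}\,dw$, one integration by parts gives
\[
\int u(w)v^{(n+1)}(w)\,dw = u(w)v^{(n)}(w) - \int u'(w)v^{(n)}(w)\,dw.
\]
I then apply the inductive hypothesis to $\int u'\,v^{(n)}\,dw$ with $u$ replaced by $u'$, which produces
\[
\int u'(w)v^{(n)}(w)\,dw = \sum_{j=1}^{n} u^{(j)}(w)v^{(n-j)}(w)(-1)^{j+1} + (-1)^{n}\int u^{(n+1)}(w)v(w)\,dw.
\]
Substituting back and using the reindexing $k=j+1$ on the resulting finite sum converts $u(w)v^{(n)}(w)$ into the $k=1$ term of the new sum and shifts the remaining terms into their proper positions $k=2,\ldots,n+1$ with signs $(-1)^{k+1}$, while the integral picks up the sign $(-1)^{n+1}$. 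This is exactly the claimed formula at level $n+1$.

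There is no genuine obstacle: the only thing one has to watch is the sign bookkeeping when the minus sign from the outer integration by parts is absorbed into the $(-1)^{j+1}$ of the inductive hypothesis, and the index shift that turns the isolated boundary term $u\,v^{(n)}$ into the first term of the extended sum. Since $u$ and $v$ are assumed $(n+1)$-times differentiable, all derivatives appearing in the computation are well defined, so no regularity issues arise.
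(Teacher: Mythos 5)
Your induction is correct: the base case is ordinary integration by parts, and the sign and index bookkeeping in the inductive step (absorbing the outer minus sign and reindexing $k=j+1$) checks out, with the $(n+1)$-fold differentiability of $u$ exactly what is needed to apply the hypothesis to $u'$. The paper states this lemma without any proof, so there is nothing to compare against; your argument is the standard one and fills that omission.
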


The proof process of the next theorem will use Lemma \ref{the6}.

\begin{theorem}\label{the10}
Let $\varphi:\mathbb D\rightarrow\mathbb D$ be an analytic function that satisfies $\varphi(0)=0$. Let $\theta=BS$ be the factorization of the inner function $\theta$ into a Blaschke product $B$ and a singular inner function $S$. If $\alpha\neq0$ is a zero of $B$, then the subspace $\theta H^p$ cannot be invariant for Deddens algebra $\mathcal D_{C_\varphi}$.
\end{theorem}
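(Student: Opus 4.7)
The plan is to argue by contradiction, exploiting the fact that the antiderivative operator $V$ lies in $\mathcal D_{C_\varphi}$ by Theorem~\ref{the9} (whose conclusion requires only $\varphi(0)=0$). Suppose $\theta H^p$ were invariant under every operator of $\mathcal D_{C_\varphi}$; then in particular $V(\theta H^p)\subseteq \theta H^p$. Since $\alpha$ is a zero of the Blaschke factor $B$ and $\theta=BS$, we have $\theta(\alpha)=0$, so every element of $\theta H^p$ must vanish at $\alpha$. Consequently
\[
V(\theta g)(\alpha)=0\quad\text{for every } g\in H^p.
\]

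Expressing this as a line integral along the segment $[0,\alpha]$ (which lies inside $\mathbb D$ since $|\alpha|<1$, and along which the value is path-independent as the integrand is holomorphic), the identity becomes
\[
\int_0^\alpha \theta(w)g(w)\,dw=0\quad\text{for every } g\in H^p.
\]
Specializing to the polynomials $g(w)=w^n$, $n\ge 0$ (all polynomials belong to $H^p$), and parametrizing $w=t\alpha$, $dw=\alpha\,dt$, one obtains
\[
\alpha^{n+1}\int_0^1 t^n\theta(t\alpha)\,dt=0.
\]
Since $\alpha\neq 0$, the moments of $\psi(t):=\theta(t\alpha)$ against $\{t^n\}_{n\ge 0}$ all vanish.

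The function $\psi$ is holomorphic on the disk $\{|t|<1/|\alpha|\}$, which strictly contains $[0,1]$. Vanishing of all its moments on $[0,1]$, combined with the Weierstrass approximation theorem, forces $\psi\equiv 0$ on $[0,1]$; the identity theorem then extends this to $\psi\equiv 0$ on the full disk $|t|<1/|\alpha|$. Since every $z\in\mathbb D$ admits the representation $z=t\alpha$ with $|t|=|z|/|\alpha|<1/|\alpha|$, we conclude $\theta\equiv 0$ on $\mathbb D$, contradicting the fact that $\theta$ is a nonzero inner function.

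The main obstacle is converting the single scalar identity $V(\theta g)(\alpha)=0$, parametrized by $g\in H^p$, into pointwise vanishing of $\theta$; the moment/identity-theorem argument bridges that gap cleanly. An alternative route, closer in spirit to Lemma~\ref{lem6}, is to choose $g=v^{(k)}$ with $v$ vanishing to high order at $0$ and apply iterated integration by parts; the surviving boundary terms at $\alpha$ would then directly expose the vanishing of $\theta$ and its derivatives at $\alpha$ of arbitrarily high order, yielding the same contradiction via the Taylor expansion of $\theta$ at $\alpha$.
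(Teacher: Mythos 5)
Your proof is correct, and it reaches the conclusion by a noticeably more direct route than the paper's. Both arguments hinge on the same two ingredients: the antiderivative operator $V$ belongs to $\mathcal D_{C_\varphi}$ (Theorem \ref{the9}), so invariance of $\theta H^p$ under $V$ together with $\theta(\alpha)=0$ forces $\int_0^\alpha \theta(w)w^n\,dw=0$ for all $n$. The paper, however, first uses invariance under $C_\varphi$ and the Schwarz lemma to track the orbit $\{\varphi_n(\alpha)\}$ and show that $0$ is a zero of $\theta$ of multiplicity at least $m$ (the multiplicity of $\alpha$), then integrates by parts $m$ times via Lemma \ref{lem6} so that the vanishing-moment identity applies to $\theta^{(m)}(s\alpha)$, and only then runs the moment argument to conclude that $\theta$ would be a polynomial of degree at most $m-1$. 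You instead apply the Weierstrass/moment argument directly to $\psi(t)=\theta(t\alpha)$ and finish with the identity theorem, which eliminates the orbit analysis, the case distinction $\varphi(z)=cz$, the multiplicity bookkeeping at $0$ and $\alpha$, and Lemma \ref{lem6} altogether; your version also makes transparent that the only feature of $\mathcal D_{C_\varphi}$ being used is the single membership $V\in\mathcal D_{C_\varphi}$ (hence only $\varphi(0)=0$, not compactness of $C_\varphi$). One minor stylistic remark: rather than representing every $z\in\mathbb D$ as $t\alpha$ with complex $t$, it suffices to observe that $\theta$ vanishes on the segment $[0,\alpha]$, a set with accumulation points in $\mathbb D$, and invoke the identity theorem on $\mathbb D$ directly; but your formulation is equally valid.
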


\begin{proof}
we get from condition that $\varphi(0)=0$ and $\parallel\varphi\parallel_\infty\leq1$. By Schwartz Lemma, we get either
$$|\varphi(z)|<|z| \text{ for all } z\in\mathbb D$$
or
$$\varphi(z)=cz \text{ for some } |c|=1.$$
Suppose $\varphi(z)=cz \text{ for some } |c|=1,$ then $C_\varphi$ is unitary operator. Hence $\mathcal D_{C_\varphi}=\mathcal L(H^p)$ and the theorem holds. Thus, we only consider the first case that $|\varphi(z)|<|z|$ for all $z\in\mathbb D$. Suppose to the contrary that $\theta H^p$ is invariant for $\mathcal D_{C_\varphi}$, that is $\mathcal D_{C_\varphi}(\theta H^p)\subseteq\theta H^p$. And let $\alpha\neq0$ is a zero of $\theta$ with multiplicity $m\geq1$, that is $mult_\theta(\alpha)= m$. Since $\theta H^p$ is invariant for $\mathcal D_{C_\varphi}$ and $C_\varphi\in\mathcal D_{C_\varphi}$, then $\theta H^p$ is invariant for $C_\varphi$. By Lemma \ref{lem4}, we get
$$\theta\circ\varphi(\alpha)=0 \text{ and } mult_{\theta\circ\varphi}(\alpha)\geq mult_\theta(\alpha)=m.$$
Then $\varphi(\alpha)$ is a zero of $\theta$ and $mult_\theta\big(\varphi(\alpha)\big)\geq m$. By parity of reasoning, $\varphi_n(\alpha)$ is a zero of $\theta$ and $mult_\theta\big(\varphi_n(\alpha)\big)\geq m$ for all $n\in\mathbb N$. Since $|\varphi(z)|<|z| \text{ for all } z\in\mathbb D$, the sequence $\{|\varphi_n(\alpha)|\}$ is decreasing. But the sequence cannot have a limit point in $\mathbb D$, otherwise, $\theta$ would be equal to 0 a.e.. Thus, there exist a positive integer $n$ such that $\varphi_n(\alpha)=0$, so $\varphi_n(\alpha)=0$ is a zero of $\theta$ and $mult_\theta(0)\geq m$. Hence there exist a function $g\in H^\infty(\mathbb D)$ such that
$$\theta(z)=z^m(z-\alpha)^mg(z) \text{ and } g(\alpha)\neq0.$$
And the function $\theta(z)z^n\in \theta H^p$ since $z^n\in H^p$ for any positive integer $n$. $\theta H^p$ is invariant for $\mathcal D_{C_\varphi}$ and the antiderivative operator $V\in\mathcal D_{C_\varphi}$, so the subspace $\theta H^p$ is invariant under the operator $V$, that is $V(\theta H^p)\subseteq\theta H^p$. Then
$$V\Big(\theta(z)z^n\Big)=\int^z_0\theta(w)w^ndw$$
belongs to $\theta H^p$ for any $n\in\mathbb N$. Then
$$\int^\alpha_0\theta(w)w^ndw=0 \text{ for } n\in\mathbb N$$
since $\theta(\alpha)=0$. Using the Lemma \ref{lem6}, and the fact that
$$\theta(0)=\theta'(0)=\theta^{(2)}(0)=...=\theta^{(m-1)}(0)=0$$ and
$$\theta(\alpha)=\theta'(\alpha)=\theta^{(2)}(\alpha)=...=\theta^{(m-1)}(\alpha)=0.$$
We get
$$\int^\alpha_0\theta^{(m)}(w)w^{n+m}dw=0 \text{ for } n\in\mathbb N$$
Let $w=s\alpha$, where $s$ is variable, then
$$\int^1_0\theta^{(m)}(s\alpha)(s\alpha)^{n+m}d(s\alpha)=0 \text{ for } n\in\mathbb N,$$
which is equivalent to
$$\int^1_0\theta^{(m)}(s\alpha)s^{n+m}ds=0 \text{ for } n\in\mathbb N.$$
Observing the functions
$$f_1(s)=s^{m}\textmd{Re}(\theta)^{(m)}(s\alpha)$$
and
$$f_2(s)=s^{m}\textmd{Im}(\theta)^{(m)}(s\alpha),$$
it is clearly that $f_1$ and $f_2$ is real-valued and equal to 0 at all moment, then $f_1$ and $f_2$ must be zero functions, so $\theta$ is a polynomial of degree up to $m-1$, contradicting $\theta(z)=z^m(z-\alpha)^mg(z)$. Thus, the assumption is invalid.
\end{proof}

Theorem \ref{the10} states that if $\theta H^p$ is invariant under the $\mathcal D_{C_\varphi}$, then $\theta(z)=z^kS(z)$ , where $k$ belongs to $\mathbb N$ and some singular inner function $S$. Then, we will prove that the $S$ must be a constant.

\begin{theorem}\label{the11}
Let $\varphi:\mathbb D\rightarrow\mathbb D$ be a analytic function that satisfies $\varphi(0)=0$. Let $S$ be a singular inner function and $n$ be a non-negative integer. If the subspace $Sz^nH^p$ is invariant for Deddens algebra $\mathcal D_{C_\varphi}$, then $S$ must be a constant.
\end{theorem}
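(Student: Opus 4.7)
The plan is to assume $S$ is non-constant and derive a contradiction using that $V\in\mathcal D_{C_\varphi}$ (Theorem 9) together with the fact that $\mathcal D_{C_\varphi}$ is an algebra containing every multiplication operator $M_h$ with $h\in H^\infty(\mathbb D)$. In particular $VM_{z^k}\in\mathcal D_{C_\varphi}$ for every $k\in\mathbb N_0$, so applying it to $Sz^n\in Sz^nH^p$ and using invariance yields a sequence $g_k\in H^p$ satisfying
$$V(Sz^{n+k})(z)=\int_0^z S(w)w^{n+k}\,dw=S(z)z^n\,g_k(z).$$

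The first key step is a change of variables $w=sz$ that rewrites the left side as $z^{n+k+1}\tilde g_k(z)$ with $\tilde g_k(z):=\int_0^1 S(sz)\,s^{n+k}\,ds$. Since $|S|\le1$, each $\tilde g_k$ lies in $H^\infty(\mathbb D)$ with $\|\tilde g_k\|_\infty\le 1/(n+k+1)$, and $g_k=z^{k+1}\tilde g_k/S$. Because $|S^{*}|=1$ a.e.\ on $\partial\mathbb D$, the radial boundary values satisfy $|g_k^{*}|\le 1/(n+k+1)$ a.e.; combined with $g_k\in H^p$, a routine outer-factor Poisson--Jensen estimate (an $H^p$ function whose boundary modulus is essentially bounded by $M$ is automatically in $H^\infty$ with norm at most $M$) upgrades this to $g_k\in H^\infty(\mathbb D)$ with $\|g_k\|_\infty\le 1/(n+k+1)$.

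The final step produces a contradiction by forcing $g_{k_0}$ to blow up pointwise inside $\mathbb D$. Suppose $S$ is non-constant, so its singular measure $\mu$ is non-trivial; since $\mu$ is singular with respect to Lebesgue measure, for $\mu$-a.e.\ $\zeta\in\partial\mathbb D$ the Poisson integral of $\mu$ blows up along the radius through $\zeta$, hence $|S(r\zeta)|\to 0$ as $r\to 1^-$. Fix such a $\zeta$. The map $s\mapsto S(s\zeta)s^n$ is continuous on $[0,1]$, vanishes at $s=1$, and is non-zero on $(0,1)$ since $S$ has no zeros in $\mathbb D$, so by Weierstrass approximation its moments cannot all vanish. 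Choose $k_0$ with $c:=\int_0^1 S(s\zeta)s^{n+k_0}\,ds\ne 0$; dominated convergence gives $\tilde g_{k_0}(r\zeta)\to c$, so eventually $|\tilde g_{k_0}(r\zeta)|\ge|c|/2$ and
$$|g_{k_0}(r\zeta)|=\frac{r^{k_0+1}|\tilde g_{k_0}(r\zeta)|}{|S(r\zeta)|}\longrightarrow\infty,$$
contradicting the bound $\|g_{k_0}\|_\infty\le 1/(n+k_0+1)$ proved above. Hence $S$ must be constant.

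I expect the main obstacle to be arranging both the vanishing $|S(r\zeta)|\to 0$ and the non-vanishing moment $\int_0^1 S(s\zeta)s^{n+k_0}\,ds\ne 0$ to occur at the same boundary point $\zeta$: the first is a Fatou-type statement about the Poisson transform of the singular measure, and the second is a density/moment argument for continuous functions on $[0,1]$, and one must combine them at a single $\zeta$ to produce the pointwise blow-up that defeats the uniform $H^\infty$ bound obtained in the preceding step.
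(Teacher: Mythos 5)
Your proof is correct, and it is assembled from the same ingredients as the paper's: the antiderivative operator $V\in\mathcal D_{C_\varphi}$ applied to $Sz^{n+m}$, the substitution $w=sz$ converting $V(Sz^{n+m})(z)$ into $z^{n+m+1}\int_0^1 S(sz)s^{n+m}\,ds$, the fact that $|S(r\zeta)|\to0$ at $\mu$-a.e.\ $\zeta\in\mathbb T$, an inner--outer factorization bound, and a Weierstrass moment argument. The only real divergence is the endgame, where the two arguments are contrapositives of each other. The paper factors $F_m=V(Sz^{n+m})=I_{F_m}O_{F_m}$, uses the invariance hypothesis to see that $S$ divides $I_{F_m}$ so that $|F_m(r\zeta)|\le|S(r\zeta)|\cdot 1\to0$, concludes that \emph{all} moments $\int_0^1 S(s\zeta)s^{n+m}\,ds$ vanish, hence $S(\cdot\,\zeta)\equiv0$ on the radius --- impossible since inner functions are zero-free --- forcing the exceptional structure to collapse and $\mu=0$. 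You run it the other way: zero-freeness and continuity give you a surviving moment first, the same factorization estimate is transferred to the cofactor $g_k=V(Sz^{n+k})/(Sz^n)$ (whose membership in $H^p$ is exactly where invariance enters) to place it in $H^\infty$ with $\|g_k\|_\infty\le1/(n+k+1)$, and the surviving moment then blows $g_{k_0}$ up along the radius. The mathematical content is identical, but your version makes the role of the invariance hypothesis more explicit and quantitative; also, the ``obstacle'' you flag at the end is not actually one, since at \emph{every} point $\zeta$ where $S(r\zeta)\to0$ the function $s\mapsto S(s\zeta)s^n$ is automatically continuous on $[0,1]$ and nonvanishing on $(0,1)$, so a nonzero moment is guaranteed there with no further selection needed.
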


\begin{proof}
Since $S$ is a singular inner function, there exist a finite positive measure $\mu$ such that $\mu$ is singular corresponding to Lebesgue measure and
$$S(z)=\exp\Big\{-\int^{2\pi}_0\frac{e^{it}+z}{e^{it}-z}d\mu(t)\Big\}.$$
Let $t\in[0,2\pi]$ and $e^{it}\in\mathbb T$, viewing $\mu$ as a measure on $\mathbb T$, then there exist a set $E$ of $\mu$ with $\mu(E)=0$ such that
$$\lim_{r\uparrow1}S(r\zeta)=0$$
for all $\zeta\in\mathbb T\setminus E$. Notice for any $f\in Sz^n H^p$ that $f$ extends to all inner functions which is part of $S$, and let $I_f$ be the inner factor of $f$. Since $Sz^n H^p$ is invariant for $\mathcal D_{C_\varphi}$ and the antiderivative operator $V\in\mathcal D_{C_\varphi}$, we get
$$V(Sz^n H^p)\subseteq Sz^n H^p.$$
And the function
$$f_m(z)=S(z)z^nz^m=S(z)z^{n+m}$$
belongs to $Sz^n H^p$ for $m\in\mathbb N_0$. Then
$$Vf_m\in Sz^n H^p.$$
Let $F_m=Vf_m$, then
$$\lim_{r\uparrow1}I_{F_m}=0 \text{ for all } \zeta\in\mathbb T\setminus E
\text{ and all } m\in\mathbb N_0.$$
Let $O_{F_m}$ be the outer factor of $F_m$, then
$$O_{F_m}(r\zeta)=\exp\Big\{\frac{1}{2\pi}\int^{2\pi}_0\frac{e^{it}+r\zeta}{e^{it}-r\zeta}
\log|F_m(e^{it})|d\mu(t)\Big\}.$$
Thus,
$$|O_{F_m}(r\zeta)|=\exp\Big\{\frac{1}{2\pi}\int^{2\pi}_0P_r(t-t_0)
\log|F_m(e^{it})|d\mu(t)\Big\},$$
where $\zeta=e^{it_0}$ and $P_r=\displaystyle{\frac{1-r^2}{|1-r^2e^{i(t-t_0)}|^2}}$ is Poisson kernel that is given by transformation of $\displaystyle{\frac{e^{it}+r\zeta}{e^{it}-r\zeta}}$. It is clearly that $\parallel F_m\parallel_\infty\leq1$. Then $\log|F_m(e^{it})|\leq0$ for $t\in[0,2\pi]$ a.e.. And Hence
$$|O_{F_m}(r\zeta)|\leq1 \text{ for all } \zeta\in\mathbb T\setminus E
\text{ and all } m\in\mathbb N_0,$$
as Poisson kernel $P_r>0$. Since $F_m=I_{F_m}O_{F_m}$, we obtain that
\begin{equation}\label{eq4.1}
\lim_{r\uparrow1}F_m(r\zeta)=0 \text{ for all } \zeta\in\mathbb T\setminus E
\text{ and all } m\in\mathbb N_0.
\end{equation}
Let $w=s\zeta$, where $s$ is variable, then
$$
\begin{aligned}
F_m(r\zeta)&=Vf_m(r\zeta)\\
&=V\big(S(r\zeta)(r\zeta)^{n+m}\big)\\
&=\int_0^{r\zeta}S(w)w^{n+m}dw\\
&=\int_0^{r}S(s\zeta)s^{n+m}\zeta^{n+m+1}ds\\
&=\zeta^{n+m+1}\int_0^{r}S(s\zeta)s^{n+m}ds.
\end{aligned}
$$
Now (\ref{eq4.1}) implies that the functions
$$F_1(s)=\textmd{Re}S(s\zeta)s^n$$
and
$$F_2(s)=\textmd{Im}S(s\zeta)s^n$$
are continuous real-valued and equal to 0 at all moment. So $F_1$ and $F_2$ must be zero function.
Hence $S(s\zeta)\equiv0$ for $\zeta\in\mathbb T\setminus E$ and all $s\in(0,1)$. Then $E=\mathbb T$. It is clearly that $\mu(\mathbb T)=0$. Thus, $S$ must be a constant.
\end{proof}

Our final step is to prove that $z^nH^p$ is indeed invariant for $\mathcal D_{C_\varphi}$. Previously, we need to prove the following lemma.

\begin{lemma}\label{lem7}
Let $\varphi:\mathbb D\rightarrow\mathbb D$ be a nonzero analytic function such that $C_\varphi$ is a compact composition operator. For any $k\in\mathbb N_0$, let $J_k$ be the set of positive integers $m$ such that $T(z^m)=z^kf(z)\in z^kH^p$ and $f(0)\neq0$ for some $f\in H^p$ and some $T\in\mathcal D_{C_\varphi}$.Then, $J_k$ must be a finite set for every $k\in\mathbb N_0$.
\end{lemma}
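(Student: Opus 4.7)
The plan is to feed the Deddens inequality the test function $z^m$ and extract a contradiction from $f(0)\ne 0$ whenever $m>k$. I take as standing hypotheses $\varphi(0)=0$ and $\|\varphi\|_\infty<1$ (the setting of the main result of this section announced in the abstract); writing $c=\|\varphi\|_\infty$ and applying Schwarz's Lemma to $\varphi/c$, a self-map of $\mathbb D$ fixing $0$, yields $|\varphi(z)|\le c|z|$, and induction then gives the geometric decay
$$\|\varphi_n\|_\infty\;\le\;c^n\qquad(n\in\mathbb N),$$
which powers the whole argument.

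Fix $m\in J_k$ and pick, by definition of $J_k$, an operator $T\in\mathcal D_{C_\varphi}$ with constant $M=M(T)>0$ and a function $f\in H^p$ satisfying $f(0)\ne 0$ and $T(z^m)=z^kf$. The Deddens estimate applied to $z^m$ reads
$$\|\varphi_n^k(f\circ\varphi_n)\|_{H^p}\;\le\;M\,\|\varphi_n^m\|_{H^p}\qquad(n\in\mathbb N).$$
For the right-hand side, assuming $m>k$, I factor $\varphi_n^m=\varphi_n^k\cdot\varphi_n^{m-k}$ and apply the submultiplicative bound $\|uv\|_{H^p}\le\|u\|_\infty\|v\|_{H^p}$, valid for every $p>0$, to get $\|\varphi_n^m\|_{H^p}\le c^{n(m-k)}\|\varphi_n^k\|_{H^p}$. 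For the left-hand side I isolate the constant-term contribution,
$$\varphi_n^k(f\circ\varphi_n)\;=\;f(0)\,\varphi_n^k\;+\;\varphi_n^k\bigl(f\circ\varphi_n-f(0)\bigr);$$
since $\|\varphi_n\|_\infty\to 0$ and $f$ is continuous at $0$, $\|f\circ\varphi_n-f(0)\|_\infty\to 0$, so the triangle inequality (or its $p$-subadditivity replacement $\|u+v\|_{H^p}^p\le\|u\|_{H^p}^p+\|v\|_{H^p}^p$ when $0<p<1$) gives, for all sufficiently large $n$,
$$\|\varphi_n^k(f\circ\varphi_n)\|_{H^p}\;\ge\;\tfrac{|f(0)|}{2^{1/p}}\,\|\varphi_n^k\|_{H^p}.$$
Chaining the three bounds and cancelling the nonzero factor $\|\varphi_n^k\|_{H^p}$ leaves $|f(0)|\le 2^{1/p}Mc^{n(m-k)}$; sending $n\to\infty$ with $m>k$ forces $f(0)=0$, contradicting the hypothesis. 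Hence every $m\in J_k$ satisfies $m\le k$, so $J_k\subseteq\{1,\dots,k\}$ is finite; the case $k=0$ runs through the same way, with $\|f\circ\varphi_n\|_{H^p}\to|f(0)|>0$ versus $\|\varphi_n^m\|_{H^p}\to 0$ forcing $J_0=\varnothing$.

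The only genuine technical obstacle I anticipate is handling the lower bound on $\|\varphi_n^k(f\circ\varphi_n)\|_{H^p}$ in the range $0<p<1$, since the ordinary triangle inequality is unavailable; replacing it by $p$-subadditivity costs only the harmless multiplicative factor $2^{1/p}$ displayed above. Everything else reduces to the Schwarz-induction decay $\|\varphi_n\|_\infty\le c^n$ and the standard inequality $\|uv\|_{H^p}\le\|u\|_\infty\|v\|_{H^p}$, both entirely elementary.
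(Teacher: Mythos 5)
Your estimate is correct as far as it goes, but it proves a different statement from the one in the lemma. Lemma \ref{lem7} assumes only that $\varphi$ is a nonzero analytic self-map of $\mathbb D$ for which $C_\varphi$ is compact; you import the standing hypotheses $\varphi(0)=0$ and $\|\varphi\|_\infty<1$, and neither follows from compactness. There are compact composition operators with $\varphi(0)\neq0$ (take any $\varphi$ with $\overline{\varphi(\mathbb D)}\subset\mathbb D$ and $\varphi(0)\neq0$) and compact ones with $\|\varphi\|_\infty=1$ (the standard cusp-domain examples). Without those hypotheses your mechanism fails at the very first step: Schwarz's Lemma no longer yields $\|\varphi_n\|_\infty\leq c^n$, the iterates $\varphi_n$ converge to the Denjoy--Wolff point $a\in\mathbb D$ rather than to $0$, and if $a\neq0$ then $\|\varphi_n^m\|_{H^p}\to|a|^m\neq0$, so no contradiction with $f(0)\neq0$ can be extracted. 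That is a genuine gap in a proof of the lemma as stated.

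What you have written is, nearly line for line, the paper's proof of Theorem \ref{the12} (which does carry the hypotheses $\varphi(0)=0$ and $\|\varphi\|_\infty<1$ and reaches the stronger conclusion that $m>k$ is impossible, i.e. $J_k\subseteq\{1,\dots,k\}$); within that setting your handling of the $0<p<1$ case via $p$-subadditivity and the uniform convergence $f\circ\varphi_n\to f(0)$ on $\{|w|\leq c^n\}$ is sound. The paper's proof of Lemma \ref{lem7} itself is entirely different and uses only compactness: it first notes that $V\in\mathcal D_{C_\varphi}$ makes $J_k$ downward closed (if $m\in J_k$ then $TV(z^{m-1})=\frac{1}{m}z^kf$ puts $m-1$ in $J_k$), so an infinite $J_k$ would contain every positive integer and $z^mH^p$ would fail to be invariant for all $m>k$; it then restricts $C_\varphi$ and the algebra to the largest invariant subspace $z^NH^p$ and argues from invariant subspaces of the compact restriction. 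To rescue your approach you would need either to strengthen the lemma's hypotheses to match Theorem \ref{the12}, or to supply a separate argument covering general compact $C_\varphi$.
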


\begin{proof}
Suppose to contrary that there exist a integer $k\in\mathbb N_0$ such that $J_k$ is an infinite set. Let $m\in J_k$ and $T\in\mathcal D_{C_\varphi}$ such that
$$T(z^m)=z^kf(z)$$
for some $f\in H^p$. Since antiderivative operator $V\in\mathcal D_{C_\varphi}$, by the definition of Deddens algebra $\mathcal D_{C_\varphi}$, we get
$$TV\in\mathcal D_{C_\varphi}.$$
It is clearly that $TVz^{m-1}\in z^kH^p$, which implies that if $m\in J_k$, then $m-1\in J_k$. Since $J_k$ is an infinite set, we obtain that
$$J_k=\mathbb N_0.$$
Thus, subspace $z^mH^p$ is not invariant for Deddens algebra $\mathcal D_{C_\varphi}$ for $m>k$. In other words, there exist a integer $N\in\mathbb N$ such that $z^NH^p$ is invariant for $\mathcal D_{C_\varphi}$, but $z^nH^p$ is not invariant for $\mathcal D_{C_\varphi}$ for $n>N$. Then $z^NH^p$ is invariant under $C_\varphi$ since $C_\varphi\in\mathcal D_{C_\varphi}$. Let operator $A$ be the restriction of $C_\varphi$ to $z^NH^p$, that is,
$$A=C_\varphi|z^NH^p,$$
then $A$ is a compact operator. Let the nontrivial invariant subspace of $\mathcal D_A$ be $\mathcal M$, then
$$\mathcal M\subset z^NH^p.$$
Let $T\in\mathcal D_{C_\varphi}$ and let operator $B$ be the restriction of $T$ to $z^NH^p$, that is,
$$B=T|z^NH^p,$$
then $B\in\mathcal D_A$ since $z^NH^p$ is invariant for $\mathcal D_{C_\varphi}$. Thus, the subspace $\mathcal M$ is invariant for $B$. And the invariant subspace of operator $T$ must be the form $z^nH^p$ for some $n>N$. The contradiction shows that $J_k$ is a finite set.
\end{proof}

The next theorem is the most important theorem in this section.

\begin{theorem}\label{the12}
Let $\varphi:\mathbb D\rightarrow\mathbb D$ be a nonzero analytic function that satisfied $\varphi(0)=0$ and $\parallel\varphi\parallel_\infty<1$. Then $z^nH^p$ is nontrivial invariant subspace for $\mathcal D_{C_\varphi}$.
\end{theorem}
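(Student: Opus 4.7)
The plan is to show, for every $T\in\mathcal D_{C_\varphi}$ and every $f\in z^nH^p$, that the first $n$ Taylor coefficients of $Tf$ at $0$ vanish, so that $Tf\in z^nH^p$. This will be achieved by combining the Deddens inequality with sharp decay rates for the iterates $\varphi_m$. Since $\varphi(0)=0$, Littlewood subordination gives $\|C_\varphi^m\|\le 1$; and since $\varphi$ is not a rotation (ruled out by $\|\varphi\|_\infty<1$), the Schwartz Lemma yields $|\varphi'(0)|<1$. An iterated Schwartz argument provides, for each $\varepsilon>0$, a constant $C_\varepsilon$ with $\|\varphi_m\|_\infty\le C_\varepsilon(|\varphi'(0)|+\varepsilon)^m$: fix $\delta>0$ so small that $|\varphi(w)|\le(|\varphi'(0)|+\varepsilon)|w|$ whenever $|w|\le\delta$, and observe that $\|\varphi_m\|_\infty<\delta$ for all $m$ large enough (since $\varphi_m\to 0$ uniformly), so the contraction kicks in on every subsequent iterate.

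For $f=z^ng\in z^nH^p$ decompose $C_\varphi^mf=\varphi_m^n(g\circ\varphi_m)$, and combine the subordination bound $\|g\circ\varphi_m\|_{H^p}\le\|g\|_{H^p}$ with the sup-norm estimate on $\varphi_m^n$ to obtain $\|C_\varphi^mf\|_{H^p}\le\|\varphi_m\|_\infty^n\|g\|_{H^p}=O\big((|\varphi'(0)|+\varepsilon)^{nm}\big)$. Now induct on $j=0,1,\dots,n-1$: assuming $(Tf)^{(i)}(0)=0$ for $i<j$, write $Tf=z^jH$ with $H\in H^p$, and use $\varphi_m(0)=0$ together with Leibniz to compute $(C_\varphi^mTf)^{(j)}(0)=j!\,\varphi_m'(0)^jH(0)=\varphi'(0)^{mj}(Tf)^{(j)}(0)$. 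The standard pointwise estimate $|h^{(j)}(0)|\le C_j\|h\|_{H^p}$ for $h\in H^p$ (from Cauchy's formula on a sub-disk) combined with the Deddens inequality $\|C_\varphi^mTf\|_{H^p}\le M\|C_\varphi^mf\|_{H^p}$ yields $|\varphi'(0)|^{mj}|(Tf)^{(j)}(0)|\le C_j'(|\varphi'(0)|+\varepsilon)^{nm}$. Dividing and choosing $\varepsilon$ so small that $(|\varphi'(0)|+\varepsilon)^n<|\varphi'(0)|^j$, which is possible because $|\varphi'(0)|^n<|\varphi'(0)|^j$ whenever $j<n$ and $|\varphi'(0)|<1$, the right-hand side tends to $0$ as $m\to\infty$, forcing $(Tf)^{(j)}(0)=0$. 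This closes the induction, so $Tf\in z^nH^p$ for every such $T$ and $f$; non-triviality of $z^nH^p$ is immediate.

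The main obstacle is the degenerate case $\varphi'(0)=0$, in which the identity $(C_\varphi^mTf)^{(j)}(0)=\varphi'(0)^{mj}(Tf)^{(j)}(0)$ becomes vacuous and the induction stalls. In that case $\varphi$ vanishes to some order $k\ge 2$ at $0$, so $\varphi_m$ vanishes to order $k^m$; one must instead read off $(Tf)^{(j)}(0)$ from the Taylor coefficient at $z^{k^m}$ of $C_\varphi^mTf$ and compare the super-geometric decay of $\|C_\varphi^mf\|_{H^p}$ with $|\varphi_m^{(k^m)}(0)/(k^m)!|$ via iterated Schwartz. The ratio still tends to $0$, but the bookkeeping is more delicate than in the generic case.
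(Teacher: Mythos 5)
Your argument in the generic case $\varphi'(0)\neq 0$ is correct and genuinely different from the paper's: you test invariance through Taylor coefficients of $C_\varphi^{m}Tf$ at the origin and play the rate $|\varphi'(0)|^{mj}$ against $(|\varphi'(0)|+\varepsilon)^{nm}$. But the degenerate case $\varphi'(0)=0$, which you flag and defer, is a genuine gap, and the repair you sketch does not work as stated. Write $\varphi(z)=cz^{k}+\cdots$ with $k\ge 2$ and $c\neq 0$. Then $\varphi_{m}$ vanishes to order $k^{m}$ at $0$ with leading coefficient $c_{m}=c^{L_{m}}$, $L_{m}=(k^{m}-1)/(k-1)$, and if $Tf=z^{j}H$ with $H(0)\neq 0$ the relevant coefficient of $C_\varphi^{m}Tf$ sits at $z^{jk^{m}}$ (not $z^{k^{m}}$) and equals $c_{m}^{j}H(0)$. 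The coefficient estimate plus the Deddens inequality plus $\parallel C_\varphi^{m}f\parallel_{H^p}\le\parallel\varphi_{m}\parallel_\infty^{n}\parallel g\parallel_{H^p}\le\delta^{nL_{m}}\parallel g\parallel_{H^p}$ (where $\delta=\parallel\varphi\parallel_\infty$) gives, up to a factor polynomial in $k^{m}$, the bound $|c|^{jL_{m}}|H(0)|\lesssim\delta^{nL_{m}}$, which forces $H(0)=0$ only if $\delta^{n}<|c|^{j}$. That inequality can fail for $1\le j<n$, because the leading coefficient $c$ may be arbitrarily small relative to $\delta$: for instance $\varphi(z)=-\delta z^{2}(z-a)/(1-az)$ has $\parallel\varphi\parallel_\infty=\delta$ but $|c|=\delta a$ with $a$ as small as you like. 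So the claim that ``the ratio still tends to $0$'' is unjustified; the two super-geometric rates need not be comparable in the direction you need.

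The paper sidesteps this entirely. Arguing by contradiction from $T(z^{s})=z^{k}f$ with $s>k$ and $f(0)\neq 0$, it writes the Deddens inequality for the $N$-th iterate as a boundary-integral comparison,
$$\int_{0}^{2\pi}|\varphi_{N}(e^{i\theta})|^{kp}\,|f\circ\varphi_{N}(e^{i\theta})|^{p}\,d\theta\le M\int_{0}^{2\pi}|\varphi_{N}(e^{i\theta})|^{sp}\,d\theta\le M\delta^{(s-k)Np}\int_{0}^{2\pi}|\varphi_{N}(e^{i\theta})|^{kp}\,d\theta,$$
uses $\parallel\varphi_{N}\parallel_\infty\le\delta^{N}\to 0$ to get $|f\circ\varphi_{N}|^{p}\ge|f(0)|^{p}/2$ for large $N$, and then cancels the common factor $\int|\varphi_{N}|^{kp}\,d\theta>0$ to reach $|f(0)|^{p}/2\le M\delta^{(s-k)Np}\to 0$. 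Because the same weight $|\varphi_{N}|^{kp}$ appears on both sides and is divided out, no information about the order of vanishing of $\varphi$ at $0$ is required. Replacing your coefficient functional by this integral comparison in the degenerate case (or uniformly in both cases) closes your proof.
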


\begin{proof}
Notice that $\parallel\varphi\parallel_\infty<1$. Let $\delta=\parallel\varphi\parallel_\infty<1$. Taking the Schwartz Lemma applied to function $\varphi/\delta$, then
$$\varphi(z)/\delta<|z| \text{ for all } z\in\mathbb D,$$
that is,
$$\varphi(z)<\delta|z| \text{ for all } z\in\mathbb D.$$
By parity of reasoning, we get
$$|\varphi_n(z)|<\delta^n|z| \text{ for all } z\in\mathbb D \text{ and } n\in\mathbb N.$$
Taking the radial limits, then
$$|\varphi_n(e^{i\theta})|<\delta^n \text{ for a.e. } \theta\in[0,2\pi] \text{ and } n\in\mathbb N.$$
Let $E$ be the subset of $\mathbb T$ such that there exist at least one function $\varphi_n$ has no radial limit, then the Lebesgue measure of subset $E$ is equal to 0 and the sequence $\{\varphi_n\}$ is convergent on $\overline{\mathbb D}\backslash E$ as well as its limit is 0. The next is to prove the theorem. Suppose to contrary that $z^nH^p$ is not nontrivial invariant subspace for $\mathcal D_{C_\varphi}$, which is equivalent to there exist non-negative integer $k,m$ and function $f\in H^p$ where $m>k$ such that
$$T(z^m)=z^kf(z) \text{ and } f(0)\neq0.$$
By the definition of $\mathcal D_{C_\varphi}$, there exist $M>0$ such that
$$\parallel C_\varphi^nT(z^m)\parallel^p_{H^p}\leq M\parallel C_\varphi^n(z^m)\parallel^p_{H^p}$$
which is equivalent to
$$\int_0^{2\pi}|\varphi_n(e^{i\theta})|^{kp}|f\circ\varphi_n(e^{i\theta})|^pd\theta\leq M
\int_0^{2\pi}|\varphi_n(e^{i\theta})|^{mp}d\theta$$
for all $n\in\mathbb N$. Since $\{\varphi_n\}$ converges uniformly to 0 on $\overline{\mathbb D}\backslash E$, there exist $N\in\mathbb N$ such that
$$|f\circ\varphi_n(e^{i\theta})|^p\geq\frac{|f(0)|^p}{2}$$
for all $n>\mathbb N$ and all $e^{i\theta}\in\overline{\mathbb D}\backslash E$. Then
\begin{equation}\label{eq4.2}
\frac{|f(0)|^p}{2}\int_0^{2\pi}|\varphi_n(e^{i\theta})|^{kp}d\theta\leq M
\int_0^{2\pi}|\varphi_n(e^{i\theta})|^{mp}d\theta
\end{equation}
for all $n\in\mathbb N$. And
\begin{equation}\label{eq4.3}
\begin{aligned}
\int_0^{2\pi}|\varphi_n(e^{i\theta})|^{mp}d\theta&=
\int_0^{2\pi}|\varphi_n(e^{i\theta})|^{kp}|\varphi_n(e^{i\theta})|^{(m-k)p}d\theta\\
&\leq\int_0^{2\pi}|\varphi_n(e^{i\theta})|^{kp}|\delta^n|^{(m-k)p}d\theta
\end{aligned}
\end{equation}
Taking (\ref{eq4.3}) in (\ref{eq4.2}), we get
$$\frac{|f(0)|^p}{2}\int_0^{2\pi}|\varphi_n(e^{i\theta})|^{kp}d\theta\leq M
|\delta|^{(m-k)np}\int_0^{2\pi}|\varphi_n(e^{i\theta})|^{kp}d\theta$$
for all $n\in\mathbb N$. Thus,
$$\frac{|f(0)|^p}{2}\leq M|\delta|^{(m-k)np} \text{ for all } n\in\mathbb N.$$
And hence $\displaystyle{\frac{|f(0)|^p}{2}}\leq M|\delta|^{(m-k)np}\rightarrow0$ when $n\rightarrow\infty$. The contradiction show that $T(z^m)\in z^mH^p$ for all $m\in\mathbb N$. Therefore $z^nH^p$ is a nontrivial invariant subspace for $\mathcal D_{C_\varphi}$.
\end{proof}

\vspace{0.3cm}
{\bf Acknowledgments: }
 The authors are thankful to MuthuKumar for his valuable suggestions.
\vspace{0.3cm}

\end{document}